\newtheorem{theorem}[equation]{Theorem}
\newtheorem{lemma}[equation]{Lemma}
\newtheorem{proposition}[equation]{Proposition}
\newtheorem{corollary}[equation]{Corollary}
\newtheorem{conjecture}[equation]{Conjecture}
\newtheorem{claim}[]{Claim}
\theoremstyle{definition}
\theoremstyle{remark}
\numberwithin{equation}{section}
\DeclareMathAlphabet{\matheur}{U}{eur}{m}{n}
\DeclareMathOperator{\Gal}{Gal}
\DeclareMathOperator{\NS}{NS}
\DeclareMathOperator{\T}{T}
\DeclareMathOperator{\Km}{Km}
\mathchardef\pFcomma=\mathcode`, 
\newcommand*\pFq[5]{%
  \begingroup
  \begingroup\lccode`~=`,
    \lowercase{\endgroup\def~}{\pFcomma\mkern\pFqskip}%
  \mathcode`,=\string"8000
  {}_{#1}F_{#2}\biggl(\genfrac..{0pt}{}{#3}{#4};#5\biggr)%
  \endgroup
}
\renewcommand{\Im}{\operatorname{Im}}
\renewcommand{\Re}{\operatorname{Re}}
\begin{document}

\title[Mahler measures as linear combinations of $L$-values]{Mahler measures as linear combinations of $L$-values of multiple modular forms}

\author{Detchat Samart}
\address{Department of Mathematics, Texas A{\&}M University, College
Station,
TX 77843, USA} \email{detchats@math.tamu.edu}

\thanks{The author's research was partially supported by NSF Grant DMS-1200577}

\subjclass[2010]{Primary: 11F67 Secondary: 33C20}

\date{\today}

\begin{abstract}
We study the Mahler measures of certain families of Laurent polynomials in two and three variables. Each of the known Mahler measure formulas for these families involves $L$-values of at most one newform and/or at most one quadratic character. In this paper, we show, either rigorously or numerically, that the Mahler measures of some polynomials are related to $L$-values of multiple newforms and quadratic characters simultaneously. The results suggest that the number of modular $L$-values appearing in the formulas significantly depends on the shape of the algebraic value of the parameter chosen for each polynomial. As a consequence, we also obtain new formulas relating special values of hypergeometric series evaluated at algebraic numbers to special values of $L$-functions.
\end{abstract}

\keywords{Mahler measures, Eisenstein-Kronecker series, $L$-functions, Hypergeometric series}

\maketitle
\section{Introduction}\label{Sec:intro}
For any Laurent polynomial $P\in \mathbb{C}[X_1^{\pm 1},\ldots,X_n^{\pm 1}]$, the Mahler measure of $P$ is defined by 
$$m(P)=\int_0^1\cdots \int_0^1 \log |P(e^{2\pi i \theta_1},\ldots,e^{2\pi i \theta_n})|\,d\theta_1\cdots d\theta_n.$$
(In some parts of the literature, $m(P)$ is called \textit{the logarithmic Mahler measure of} $P$, but throughout this paper we shall omit the term \textit{logarithmic}.) In the univariate case, the Mahler measure can be calculated quite easily with the help of Jensen's formula. However, there seems not to be a general formula for Mahler measures of randomly chosen multivariate polynomials, and it is still unclear what are the precise ways that Mahler measures are related to the polynomials. It was Deninger \cite{Deninger} who first used the Bloch-Beilinson conjectures to predict that Mahler measures of certain polynomials are related to special values of $L$-functions. In particular, he conjectured that the following formula holds:
$$m(x+x^{-1}+y+y^{-1}+1)=\frac{15}{4\pi^2}L(E,2)=L'(E,0),$$ where $E$ is the elliptic curve of conductor $15$ defined by the projective closure of the zero locus of $x+x^{-1}+y+y^{-1}+1$. 
This formula had been conjectural for years before being proved by Rogers and Zudilin \cite{RZ}. 

To consider more general situations, we let $$P_k=x+x^{-1}+y+y^{-1}+k,$$ where $k\in\mathbb{C}.$ It was verified numerically by Boyd \cite{Boyd} that, for many integral values of $k\neq 0,\pm 4$, if $E_k$ is the elliptic curve over $\mathbb{Q}$ determined by the zero locus of $P_k$, then 
\begin{equation}\label{E:ck}
m(P_k)\stackrel{?}=c_kL'(E_k,0),
\end{equation}
where $c_k$ is a rational number of small height. (Here and throughout $\stackrel{?}=$ means that they are equal to at least 25 decimal places.) Note that by the modularity theorem the relation \eqref{E:ck} is equivalent to 
\begin{equation*}\label{E:ck2}
m(P_k)\stackrel{?}=c_kL'(h_k,0),
\end{equation*}
where $h_k$ is the newform of weight 2 associated to $E_k$. (In most situations, we will be dealing with $L$-values of newforms rather than those attached to algebraic varieties.)
Although Boyd's results seem to be highly accurate, rigorous proofs of these formulas are quite rare (see Table~\ref{tab:Pk} below). Inspired by these results, Rodriguez Villegas \cite{RV} proved that $m(P_k)$ can be expressed in terms of Eisenstein-Kronecker series, and for certain values of $k$ they turn out to be related to special values of $L$-series of elliptic curves with complex multiplication. For instance, he proved that 
\begin{align}
m(P_{4\sqrt{2}})&=L'(E_{4\sqrt{2}},0)=L'(f_{64},0),\label{E:RV1} \\ 
m\left(P_{\frac{4}{\sqrt{2}}}\right)&=L'\left(E_{\frac{4}{\sqrt{2}}},0\right)=L'(f_{32},0), \label{E:RV2}
\end{align}
where $f_{64}$ and $f_{32}$ are newforms of weight 2 and level $64$ and $32$, associated to the elliptic curves $E_{4\sqrt{2}}$ and $E_{\frac{4}{\sqrt{2}}}$, respectively. He also observed from his numerical data that the relation \eqref{E:ck} seems to hold for every sufficiently large $k$ such that $k^2\in \mathbb{Z}$. One of the possible reasons why one needs $k$ to be the square root of a rational number or an integer is that $E_k$ has a Weierstrass form
\begin{equation*}
y^2=x^3+\frac{k^2}{8}\left(\frac{k^2}{8}-1\right)x^2+\frac{k^4}{256}x,
\end{equation*} 
which is defined over $\mathbb{Q}$ if $k^2\in \mathbb{Q}$.
For a complete list of conjectured formulas obtained from Rodriguez Villegas's computational experiments, see \cite[Tab.~4]{RV}. 
\begin{savenotes}
\begin{table}[ht]
\centering
    \begin{tabular}{ | c | c |}
    \hline
    $k^2$ & Reference(s)  \\ \hline
    $8,16$\footnote{When $k=\pm 4$, $E_k$ is a curve of genus $0$ and $m\left(P_k\right)=2L'(\chi_{-4},-1)$, where $\chi_{-4}(n)=\left(\frac{-4}{n}\right).$}$,18,32$ & \cite{RV}  \\ \hline
    $1$ & \cite{RZ,Zudilin}  \\ \hline
    $4,64$ & \cite{LR}  \\ \hline
    $-4,-1,2$ & \cite{RZ0,Zudilin}  \\ \hline
    \end{tabular}
    \caption{Values of $k$ for which Formula~\eqref{E:ck} is known to be true.}
\label{tab:Pk}
\end{table}
\end{savenotes}

In Section~\ref{Sec:two} we will deduce formulas for $m(P_k)$ when $k=\sqrt{8\pm6\sqrt{2}}$. Indeed, we will prove that 
\begin{equation}\label{E:first}
m\left(P_{\sqrt{8\pm6\sqrt{2}}}\right)=\frac{1}{2}\left(L'(f_{64},0)\pm L'(f_{32},0)\right).
\end{equation}
Using similar arguments one obtains conjectured formulas in terms of two different $L$-values for $m(P_k)$ when $k=\sqrt{8\pm9\sqrt{2}}.$ Observe that in these cases $k^2\notin\mathbb{Q},$ so it is not surprising that our results are somewhat different from those of Rodriguez Villegas. In addition, we consider the Hesse family  
$$Q_k=x^3+y^3+1-kxy.$$ The corresponding elliptic curve defined by $Q_k$ has a Weierstrass model
\begin{equation*}
y^2=x^3-27k^6x^2+216k^9(k^3-27)x-432k^{12}(k^3-27)^2.
\end{equation*} 
This family was also investigated in \cite{RV}, and it was pointed out that the Mahler measures of $Q_k$ appear to be of the form \eqref{E:ck} when $k$ is sufficienly large and $k^3\in\mathbb{Z},$ as hinted by the Weierstrass form given above. On the other hand, we will prove that if $k=\sqrt[3]{6-6\sqrt[3]{2}+18\sqrt[3]{4}},$ then 
$$m(Q_{k})=\frac{1}{2}\left(L'(f_{108},0)+L'(f_{36},0)-3L'(f_{27},0)\right),$$ where $f_N$ is a newform of weight $2$ and level $N$. Remark that, in this case, the elliptic curve corresponding to $Q_k$ is defined over $\mathbb{Q}\left(\sqrt[3]{2}\right)$ rather than $\mathbb{Q}$.

In Section~\ref{Sec:three} we will establish some formulas concerning three-variable Mahler measures. The author showed in \cite{Samart} that 
for many values of $k$ the Mahler measures of the following Laurent polynomials:
\begin{align*}
&(x+x^{-1})(y+y^{-1})(z+z^{-1})+k,\\
&(x+x^{-1})^2(y+y^{-1})^2(1+z)^3z^{-2}-k,\\
&x^4+y^4+z^4+1+kxyz
\end{align*}
are of the form 
\begin{equation} \label{E:three}
m(P)=c_1L'(g,0)+c_2L'(\chi,-1)
\end{equation}
for some CM newform $g$ of weight $3$ with rational Fourier coefficients, an odd quadratic character $\chi$, and $c_1,c_2\in\mathbb{Q}.$ To obtain the formulas of type \eqref{E:three}, it seems that the chosen value of $k$ necessarily satisfies similar conditions as observed in the two-variable case. For instance, for the last family, $k$ must be sufficiently large and $k^4\in\mathbb{Z}.$ By simple transformation, one sees that the $K3$ surfaces corresponding to this family are birational to those defined by the zero loci of $x^4+y^4+z^4+xyz+k^{-4}$. When $k^4\in\mathbb{Q}$, the $K3$ surfaces are then defined over $\mathbb{Q}$ and are known to be modular in the sense that their attached $L$-series coincide with $L$-series of weight three cusp forms by a result of Livn\'{e} \cite{Liv}. Moreover, in the case of singular $K3$ surfaces, the corresponding cusp forms are CM, and a complete list of them can be found in \cite[Tab.~1]{Schutt}. Therefore, in this particular case, one might expect the Mahler measures to be related to CM weight three cusp forms. On the other hand, we will give first examples of Mahler measures of polynomials in this family when $k^4$ are algebraic integers but not rational integers which reveal similar phenomena as seen in the two-variable case. For example, it will be proved that when $k=\sqrt[4]{26856+15300\sqrt{3}}$ the following equality is true:
\begin{equation*}
m(x^4+y^4+z^4+1+kxyz)=\frac{5}{48}\left(20L'(g_{12},0)+4L'(g_{48},0)+11L'(\chi_{-3},-1)+8L'(\chi_{-4},-1)\right),
\end{equation*}
where $g_N$ is a newform of weight $3$ and level $N$ and $\chi_{D}(n)=\left(\frac{D}{n}\right).$ 

In Section~\ref{Sec:four} we give lists of values of $k$ corresponding to singular $K3$ surfaces in the families given by the zero loci of $(x+x^{-1})(y+y^{-1})(z+z^{-1})+k$ and $x^4+y^4+z^4+1+kxyz$. It turns out that the Mahler measures of the polynomials defining these singular $K3$ surfaces are all conjecturally equal to rational linear combinations of modular and Dirichlet $L$-values. The (conjectural) Mahler measure formulas obtained from numerical computations are illustrated in Table~\ref{tab:f2}-\ref{tab:f4} at the end of this paper.

In Section~\ref{Sec:five} we establish a functional equation of three-variable Mahler measures, which gives us a five-term relation between the Mahler measures with algebraic arguments. We also give an explicit example which is related to multiple special $L$-values. Many parts of this problem are still wide open and can be done further in several directions.

One of the things that all families mentioned above have in common is that their Mahler measures can be written in terms of hypergeometric series. Therefore, one can deduce some interesting hypergeometric evaluations from Mahler measure formulas easily. For instance, the equality \eqref{E:first} implies that 
\begin{equation*}
\pFq{4}{3}{\frac{3}{2},\frac{3}{2},1,1}{2,2,2}{-16+12\sqrt{2}}=\frac{4+ 3\sqrt{2}}{2}\left(\log(8+6\sqrt{2})-(L'(f_{64},0)+ L'(f_{32},0))\right).
\end{equation*}

\noindent\textbf{Acknowledgements}\\
The author would like to express his gratitude to Matthew Papanikolas for many helpful discussions and his continuous encouragement during the period of this work. The author would also like to thank Mathew Rogers for useful comments on the preliminary version of this paper and he is indebted to the referee for detailed remarks which help improve many parts of the manuscript.

\section{Two-variable Mahler measures} \label{Sec:two}
As mentioned earlier, we will study Mahler measures of the two families with the complex parameter $t$, namely 
\begin{align*}
m_2(t)&:=2m(P_{t^{1/2}})=2m(x+x^{-1}+y+y^{-1}+t^{1/2}),\\
m_3(t)&:=3m(Q_{t^{1/3}})=3m(x^3+y^3+1-t^{1/3}xy).
\end{align*}
It is known that for most values of $t$ the Mahler measures $m_2(t)$ and $m_3(t)$ can be expressed in terms of hypergeometric series. Indeed, we have the following result: (See, for instance, \cite[Thm.~3.1]{RogersHyper}.)
\begin{proposition}\label{T:Hyper2}
Let $m_2(t)$ and $m_3(t)$ be as defined above.
\begin{enumerate}
\item[(i)]
If $t\neq 0$, then 
$\displaystyle m_2(t)=\Re\left(\log(t)-\frac{4}{t}\pFq{4}{3}{\frac{3}{2},\frac{3}{2},1,1}{2,2,2}{\frac{16}{t}}\right).$

\item[(ii)]
If $|t|\geq 27$, then
$\displaystyle m_3(t)=\Re\left(\log(t)-\frac{6}{t}\pFq{4}{3}{\frac{4}{3},\frac{5}{3},1,1}{2,2,2}{\frac{27}{t}}\right).$
\end{enumerate}
\end{proposition}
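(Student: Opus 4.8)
The plan is to derive both formulas from the classical expression of $m_2$ and $m_3$ in terms of an Eisenstein–Kronecker series (equivalently, a weight-one Eisenstein-like object integrated over a modular curve), following the approach pioneered by Rodriguez Villegas \cite{RV} and streamlined by Rogers. First I would recall that $m(P_k)$, as a function of the parameter, satisfies an inhomogeneous Picard–Fuchs (ODE) relation coming from the fact that the family $P_k=0$ is an elliptic pencil: differentiating under the integral sign and using Stokes' theorem, one shows that $\frac{d}{dk}m(P_k)$ is a period of the associated family of elliptic curves, hence a solution of a second-order hypergeometric ODE. Concretely, with the substitution $k^2 = t$ one is led to the $\,_2F_1\!\left(\tfrac12,\tfrac12;1;\tfrac{16}{t}\right)$-type periods for the Beauville family, and with $k^3=t$ to the $\,_2F_1\!\left(\tfrac13,\tfrac23;1;\tfrac{27}{t}\right)$-type periods for the Hesse family. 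The key identity underlying both cases is the Ramanujan-type formula $\frac{d}{dt}\!\left(\tfrac{c}{t}\,_4F_3(\cdots;c/t)\right)$ telescoping against the square of the relevant $\,_2F_1$, which makes the $\,_4F_3$ a natural antiderivative.

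Next, the analytic step: one integrates the ODE starting from a point where the Mahler measure is known in closed form, or alternatively one checks that both sides have the same asymptotic expansion as $t\to\infty$. For (i), as $t\to\infty$ we have $m(x+x^{-1}+y+y^{-1}+t^{1/2})=\log|t^{1/2}|+o(1)=\tfrac12\Re\log t+o(1)$ by Jensen's formula applied in, say, the $x$-variable, while the $\,_4F_3$ term is $O(1/t)\to 0$; matching the two sides then reduces to verifying that their derivatives agree, which is precisely the hypergeometric identity from the previous paragraph. For (ii) the same device applies: for $|t|\ge 27$ one can expand $m_3(t)$ via Jensen's formula in the $z$-variable (the Hesse polynomial $Q_k$ is linear in no single variable but $\log|Q_k|$ still yields, after the angular integrations, the stated series — this is the content of \cite[Thm.~3.1]{RogersHyper}), obtaining $\Re\log t$ as the leading term and an $O(27/t)$ correction, and again differentiating reduces the claim to a $\,_2F_1$ contiguous/Clausen-type identity. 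Taking real parts throughout is harmless because for real $t$ outside the relevant cut everything is already real, and both sides are holomorphic in $t$ on the stated domains, so equality propagates by analytic continuation.

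The main obstacle I anticipate is not the formal manipulation of hypergeometric series — those identities are classical (Clausen's formula expressing $\,_2F_1(a,b;a+b+\tfrac12;z)^2$ as a $\,_3F_2$, plus term-by-term integration) — but rather justifying the interchange of differentiation and integration in the Mahler-measure integral uniformly near the boundary of the domain of validity (e.g. $t$ near $16$ for $m_2$, or $|t|$ near $27$ for $m_3$), where the integrand $\log|P|$ acquires logarithmic singularities as the contour torus $|x|=|y|=1$ meets the zero locus. One handles this by restricting first to $|t|$ large (where the torus stays away from $\{P=0\}$ and everything is manifestly smooth), proving the identity there, and then invoking analyticity of both sides on the full stated range to conclude; the convergence condition $|t|\ge 27$ in (ii) is exactly the threshold guaranteeing $|27/t|\le 1$ so that the $\,_4F_3$ converges, and for $t\ne 0$ in (i) the factor $16/t$ together with the $\log t$ term shows the formula is really an identity of germs that extends. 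Since the excerpt already grants us \cite[Thm.~3.1]{RogersHyper}, in the write-up I would simply cite that theorem and the Picard–Fuchs computation of \cite{RV}, and devote the actual argument to recording the precise normalizations and the Clausen-type identity that makes the $\,_4F_3$ appear.
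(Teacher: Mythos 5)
The paper does not actually prove this proposition: it is stated as a quoted result with the citation \cite[Thm.~3.1]{RogersHyper}, so your closing decision to ``simply cite that theorem'' is exactly what the paper does, and in that sense your proposal matches the source. That said, two points in your sketch deserve correction if you ever carry it out. First, the Picard--Fuchs/Clausen machinery is not the natural route here and Clausen's formula plays no role in the two-variable case (it is the three-variable, $K3$ situation where squares of $\pFq{2}{1}{a,b}{c}{z}$'s intervene); the direct argument is simply to write $m(k+Q)=\log|k|+m(1+Q/k)$, expand $\log(1+Q/k)$ as a power series, and observe that the constant term of $Q^n$ is $\binom{2m}{m}^2$ for $n=2m$ (resp.\ $(3m)!/m!^3$ for $n=3m$ in the Hesse case), which after the substitution $t=k^2$ (resp.\ $t=k^3$) is precisely the coefficient $\frac{1}{m}\binom{2m}{m}^2$ (resp.\ $\frac{1}{m}\frac{(3m)!}{m!^3}$) of $t^{-m}$ in $\frac{4}{t}\pFq{4}{3}{\frac32,\frac32,1,1}{2,2,2}{\frac{16}{t}}$ (resp.\ $\frac{6}{t}\pFq{4}{3}{\frac43,\frac53,1,1}{2,2,2}{\frac{27}{t}}$). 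This proves the identity outright for $|t|$ large, with no ODE to integrate. Second, your final step --- ``both sides are holomorphic in $t$ on the stated domains, so equality propagates by analytic continuation'' --- is the one place the argument would genuinely fail for part (i): the stated domain is all of $t\neq 0$, but $m_2(t)$ is not real-analytic across the segment $t\in(0,16]$ (where the torus meets the zero locus), and the ${}_4F_3$ has its branch cut along exactly that segment. Extending the identity from $|t|>16$ to the full punctured plane is the nontrivial content of Rogers' theorem (one must argue that the real part of the continued expression remains continuous across the cut and still computes the Mahler measure), and it cannot be dispatched by invoking holomorphy. Since you defer to \cite[Thm.~3.1]{RogersHyper} for exactly this, the proposal is acceptable, but the sketch as written should not be mistaken for a proof of the full statement.
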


Furthermore, Kurokawa and Ochiai \cite{KO} and Lal\'{i}n and Rogers \cite{LR} showed that $m_2(t)$ satisfies some functional equations, which enable us to prove and to conjecture new Mahler measure formulas for some $t\notin \mathbb{Z}.$ Throughout this section, $f_N$ denotes a normalized newform of weight $2$ and level $N$ with rational Fourier coefficients.
\begin{theorem}\label{P:Main2}
The following identities are true:
\begin{align}
m_2(8+6\sqrt{2})&=L'(f_{64},0)+L'(f_{32},0),\label{E:Main21}\\
m_2(8-6\sqrt{2})&=L'(f_{64},0)-L'(f_{32},0),\label{E:Main22}
\end{align}
where $\displaystyle f_{64}(\tau)=\frac{\eta^8(8\tau)}{\eta^2(4\tau) \eta^2(16\tau)}\in S_2(\Gamma_0(64))$ and $f_{32}(\tau)=\eta^2(4\tau) \eta^2(8\tau)\in S_2(\Gamma_0(32)).$ (As usual, let $\eta$ denote the Dedekind eta function,  $$\eta(\tau)=q^{\frac{1}{24}}\prod_{n=1}^{\infty}(1-q^n),$$ where $q=e^{2\pi i \tau}$, and let $S_k(\Gamma_0(N))$ denote the space of cusp forms of weight $k$ and level $N$.)
\end{theorem}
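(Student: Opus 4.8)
The plan is to combine three ingredients: (1) Rodriguez Villegas's identities \eqref{E:RV1}--\eqref{E:RV2}, namely $m(P_{4\sqrt 2})=L'(f_{64},0)$ and $m(P_{4/\sqrt 2})=L'(f_{32},0)$, rewritten in the $m_2$-normalization as $m_2(32)=2L'(f_{64},0)$ and $m_2(8)=2L'(f_{32},0)$; (2) the hypergeometric representation of Proposition~\ref{T:Hyper2}(i), which lets us treat $m_2(t)$ as an analytic function of the single variable $16/t$; and (3) the functional equations for $m_2(t)$ coming from Kurokawa--Ochiai \cite{KO} and Lal\'in--Rogers \cite{LR}. The idea is that $t=8\pm6\sqrt 2$ are precisely the arguments that a suitable functional equation links to the pair $\{8,32\}$, or more precisely that the combination $m_2(8+6\sqrt2)+m_2(8-6\sqrt2)$ collapses (via a quadratic-type transformation) to $m_2(32)$ and the difference $m_2(8+6\sqrt2)-m_2(8-6\sqrt2)$ collapses to $m_2(8)$; dividing by $2$ then yields \eqref{E:Main21}--\eqref{E:Main22}, which here are stated without the factor $\tfrac12$ because $m_2(t)=2m(P_{t^{1/2}})$ already absorbs it.

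Concretely, the first step is to recall the exact shape of the Kurokawa--Ochiai/Lal\'in--Rogers functional equation. It has the form $m_2(t)+m_2(\text{something})=$ (combination of $m_2$ at related points), obtained by comparing the two natural modular parametrizations of the Beilinson regulator attached to $P_k$; in hypergeometric terms it reflects a transformation of $\pFq{4}{3}{3/2,3/2,1,1}{2,2,2}{z}$ under $z\mapsto$ (image of $z$ under the relevant degree-two isogeny on the underlying elliptic surface). I would solve for the parameter(s) making the related points land on $\{8,32\}$ (equivalently $16/t\in\{2,1/2\}$); the resulting quadratic in $t$ should be exactly $t^2-16t-8=0$, whose roots are $8\pm6\sqrt2$. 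Checking that $8-6\sqrt2>0$ (indeed $8-6\sqrt2\approx-0.485<0$, so one must be slightly careful and work with the real part as in Proposition~\ref{T:Hyper2}, or pass through $|16/t|$) is a small technical point worth flagging.

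With the functional equation in hand, the computation is: write $A=m_2(8+6\sqrt2)$, $B=m_2(8-6\sqrt2)$; the functional equation should give two independent linear relations, e.g. $A+B=m_2(32)=2L'(f_{64},0)$ and $A-B=m_2(8)=2L'(f_{32},0)$ (up to checking signs and the precise combination); solving gives $A=L'(f_{64},0)+L'(f_{32},0)$ and $B=L'(f_{64},0)-L'(f_{32},0)$, which is the claim. One still must verify that $f_{64}=\eta^8(8\tau)/(\eta^2(4\tau)\eta^2(16\tau))$ and $f_{32}=\eta^2(4\tau)\eta^2(8\tau)$ are the newforms attached to $E_{4\sqrt2}$ and $E_{4/\sqrt2}$ respectively; this is a standard identification of eta-quotients with the relevant cusp spaces (dimension-one checks plus matching a few Fourier coefficients), and it is implicit in \cite{RV}.

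The main obstacle is the second step: pinning down the precise functional equation and verifying that its "related points" are exactly $8$ and $32$ (rather than some other pair requiring a further transformation). The functional equations in \cite{KO} and \cite{LR} are typically stated for $m_2$ at specific algebraic arguments or in terms of the modular variable $\tau$, so the work is to translate them into a clean statement $m_2(8+6\sqrt2)\pm m_2(8-6\sqrt2)=2L'(f_{64},0)\pm 2L'(f_{32},0)$ and to make sure no spurious $\log$ terms survive — the $\Re(\log t)$ pieces in Proposition~\ref{T:Hyper2}(i) must cancel in the relevant combinations, which is plausible since $\log(8+6\sqrt2)+\log(8-6\sqrt2)=\log(64-72)=\log(-8)$ contributes only an imaginary part, consistent with the appearance of $\Re$. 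Once the functional equation is correctly matched, the rest is bookkeeping.
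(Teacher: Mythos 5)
Your proposal follows essentially the same route as the paper: it combines Rodriguez Villegas's evaluations $m_2(32)=2L'(f_{64},0)$ and $m_2(8)=2L'(f_{32},0)$ with the Kurokawa--Ochiai identity $2m_2\bigl(4(k+1/k)^2\bigr)=m_2(16k^4)+m_2(16/k^4)$ at $k=2^{1/4}$ and the Lal\'{i}n--Rogers identity $m_2\bigl(4(k+1/k)^2\bigr)+m_2\bigl(-4(k-1/k)^2\bigr)=m_2(16/k^4)$ at $k=2^{-1/4}$, which yield exactly the linear system you anticipated (namely $2A=m_2(32)+m_2(8)$ and $A+B=m_2(32)$, equivalent to your $A+B=m_2(32)$, $A-B=m_2(8)$), with the correct quadratic $t^2-16t-8=0$ identifying $8\pm6\sqrt{2}$. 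The only step your sketch defers --- pinning down the precise functional equations --- is handled in the paper by direct citation of \cite[Thm.~7]{KO} and \cite[Thm.~2.2]{LR}, so the argument goes through as you outline it.
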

\begin{proof}
It was proved in \cite[Thm.~7]{KO} that if $k\in\mathbb{R}\backslash\{0\}$, then 
\begin{equation}\label{E:KO} 2m_2\left(4\left(k+\frac{1}{k}\right)^2\right)=m_2(16k^4)+m_2\left(\frac{16}{k^4}\right).
\end{equation}
Recall from \eqref{E:RV1} and \eqref{E:RV2} that $m_2(32)=2L'(f_{64},0)$ and $m_2(8)=2L'(f_{32},0)$, so we can deduce \eqref{E:Main21} easily by substituting $k=2^{1/4}$ in \eqref{E:KO}.
On the other hand, one sees from \cite[Thm.~2.2]{LR} that the following functional equation holds for any $k$ such that $0<|k|<1:$
\begin{equation}\label{E:LR}
m_2\left(4\left(k+\frac{1}{k}\right)^2\right)+m_2\left(-4\left(k-\frac{1}{k}\right)^2\right)=m_2\left(\frac{16}{k^4}\right).
\end{equation}
In particular, choosing $k=2^{-1/4}$, we obtain 
$$m_2(8+6\sqrt{2})+m_2(8-6\sqrt{2})=m_2(32).$$ 
Now \eqref{E:Main22} follows immediately from the known information above.
\end{proof}

Rodriguez Villegas \cite[Tab.~4]{RV} verified numerically that $\displaystyle m_2(128)\stackrel{?}=\frac{1}{2}L'(f_{448},0)$ and $\displaystyle m_2(2)=\frac{1}{2}L'(f_{56},0),$ where $f_{448}(\tau)=q-2q^5-q^7-3q^9+4q^{11}-2q^{13}-6q^{17}-\cdots$ and $f_{56}(\tau)=q+2q^5-q^7-3q^9-4q^{11}+2q^{13}-6q^{17}+\cdots$. (In fact,  the latter identity was recently proved by Zudilin \cite{Zudilin}.) Therefore, letting $k=2^{3/4}$ in \eqref{E:KO} and $k=2^{-3/4}$ in \eqref{E:LR} results in a couple of conjectured formulas similar to \eqref{E:Main21} and \eqref{E:Main22}.

\begin{conjecture}
The following identities are true: 
\begin{align*}
m_2\left(8+9\sqrt{2}\right)&\stackrel{?}=\frac{1}{4}\left(L'(f_{448},0)+L'(f_{56},0)\right),\\
m_2\left(8-9\sqrt{2}\right)&\stackrel{?}=\frac{1}{4}\left(L'(f_{448},0)-L'(f_{56},0)\right).
\end{align*}
\end{conjecture}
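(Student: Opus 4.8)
The plan is to run the argument of Theorem~\ref{P:Main2} essentially verbatim, with the roles of \eqref{E:RV1}--\eqref{E:RV2} now played by the evaluations $m_2(2)=\frac12 L'(f_{56},0)$ (a theorem of Zudilin) and $m_2(128)\stackrel{?}=\frac12 L'(f_{448},0)$ (verified numerically by Rodriguez Villegas). Since the latter is still only conjectural, what one actually \emph{proves} is that both displayed identities are equivalent to the single statement $m_2(128)=\frac12 L'(f_{448},0)$; the conjecture is thus a conditional theorem.

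First I would substitute $k=2^{3/4}$ into the Kurokawa--Ochiai functional equation \eqref{E:KO}. Then $k^4=8$, so the right-hand side becomes $m_2(16k^4)+m_2(16/k^4)=m_2(128)+m_2(2)$; and since $k+k^{-1}=2^{-3/4}(1+2\sqrt2)$ one computes $4(k+k^{-1})^2=2^{1/2}(1+2\sqrt2)^2=8+9\sqrt2$. Hence
\begin{equation*}
2\,m_2(8+9\sqrt2)=m_2(128)+m_2(2),
\end{equation*}
and inserting the two evaluations above yields the first identity (conditionally on the one for $m_2(128)$).

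For the second identity I would feed $k=2^{-3/4}$, which satisfies $0<|k|<1$, into the Lal\'in--Rogers functional equation \eqref{E:LR}. Now $16/k^4=128$, while $4(k+k^{-1})^2=8+9\sqrt2$ exactly as before and $-4(k-k^{-1})^2=-2^{1/2}(1-2\sqrt2)^2=8-9\sqrt2$, so
\begin{equation*}
m_2(8+9\sqrt2)+m_2(8-9\sqrt2)=m_2(128).
\end{equation*}
Subtracting the previous display from this one and using $m_2(128)\stackrel{?}=\frac12 L'(f_{448},0)$ gives $m_2(8-9\sqrt2)\stackrel{?}=\frac14\bigl(L'(f_{448},0)-L'(f_{56},0)\bigr)$, as claimed.

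The only genuinely hard part — indeed the sole missing ingredient — is proving $m_2(128)=\frac12 L'(f_{448},0)$: the two functional equations are rigorous, Zudilin's evaluation of $m_2(2)$ is rigorous, and the algebra with surds is elementary. A proof of the $m_2(128)$ formula would presumably proceed through the Eisenstein--Kronecker series expansion of $m_2$ in the style of Rodriguez Villegas together with a Rogers--Zudilin type identity matching it to $L'(f_{448},0)$; this is the step I expect to be the real obstacle, since the level $448=2^6\cdot 7$ is larger and the associated modular machinery correspondingly heavier than in the cases already settled in \eqref{E:RV1}, \eqref{E:RV2} and Theorem~\ref{P:Main2}.
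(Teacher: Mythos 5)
Your derivation is exactly the paper's: substitute $k=2^{3/4}$ into \eqref{E:KO} and $k=2^{-3/4}$ into \eqref{E:LR}, then feed in Zudilin's $m_2(2)=\tfrac12 L'(f_{56},0)$ and the still-conjectural $m_2(128)\stackrel{?}=\tfrac12 L'(f_{448},0)$; your algebra with the surds checks out. You also correctly identify that the sole unproved ingredient is the $m_2(128)$ evaluation, which is precisely why the paper states these identities only as a conjecture.
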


We also found via numerical computations the following conjectured formulas:
\begin{align*}
m_2\left(\frac{49+9\sqrt{17}}{2}\right)&\stackrel{?}=\frac{1}{2}\left(L'(f_{289},0)+8L'(f_{17},0)\right),\\
m_2\left(\frac{49-9\sqrt{17}}{2}\right)&\stackrel{?}=\frac{1}{2}\left(L'(f_{289},0)-8L'(f_{17},0)\right),
\end{align*}
where $f_{289}(\tau)=q-q^2-q^4+2q^5-4q^7+3q^8-3q^9-\cdots$ and $f_{17}(\tau)=q-q^2-q^4-2q^5+4q^7+3q^8-3q^9+\cdots$. Observe that we can again employ the identity \eqref{E:KO} for $k=(1+\sqrt{17})/4$ to deduce 
\begin{equation*}
2m_2(17)=m_2\left(\frac{49+9\sqrt{17}}{2}\right)+m_2\left(\frac{49-9\sqrt{17}}{2}\right)\stackrel{?}=L'(f_{289},0),
\end{equation*}
which is equivalent to a conjectured formula in \cite[Tab.~4]{RV}. A weaker form of these formulas, namely 
\begin{equation*}
m_2\left(\frac{49+9\sqrt{17}}{2}\right)-m_2(17)\stackrel{?}=4L'(f_{17},0),
\end{equation*}
was also briefly discussed in \cite[\S 4]{RY}.

To study the Mahler measure $m_3(t)$, we use the following crucial result, which basically states that $m_3(t)$ can be written in terms of 
Eisenstein-Kronecker series when $t$ is parameterized properly.

\begin{proposition}[{Rodriguez Villegas \cite[\S IV]{RV}}]\label{T:RV}
Let $\displaystyle t_3(\tau)=27+\left(\frac{\eta(\tau)}{\eta(3\tau)}\right)^{12}$, and let $\mathcal{F}$ be the fundamental domain for $\Gamma_0(3)$ with vertices $i\infty,0,(1+i/\sqrt{3})/2,$ and $(-1+i/\sqrt{3})/2.$ If $\tau\in \mathcal{F},$ then $$m_3(t_3(\tau))=\frac{81\sqrt{3}\Im(\tau)}{4\pi^2}\sideset{}{'}\sum_{m,n\in\mathbb{Z}}\frac{\chi_{-3}(m)(m+3n\Re(\tau))}{[(m+3n\tau)(m+3n\bar{\tau})]^2},$$ where $\displaystyle\sideset{}{'}\sum_{m,n}$ means that $(m,n)=(0,0)$ is excluded from the summation.
\end{proposition}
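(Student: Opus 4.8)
The plan is to realise $m_3(t)$ as an explicit real multiple of a Beilinson regulator on the Hesse elliptic curve and then to expand that regulator as an Eisenstein--Kronecker series. First I would exploit that $Q_k=x^3+y^3+1-kxy$ is monic of degree $3$ in the variable $y$: applying Jensen's formula in $y$ collapses the defining double integral of $m(Q_k)$ to a one-dimensional integral, along a path $\gamma$ on the curve $E_k\colon Q_k(x,y)=0$, of the closed $1$-form $\log|x|\,d\arg y-\log|y|\,d\arg x$ whose periods compute the regulator of the symbol $\{x,y\}\in K_2(E_k)\otimes\QQ$. When $k=t^{1/3}$ with $t=t_3(\tau)$ and $\tau$ ranges over the fundamental domain $\mathcal{F}$, the branch configuration of the roots $y=y(x)$ over $|x|=1$ is such that $\gamma$ may be taken to be a generator of $H_1(E_k,\ZZ)$; hence, up to sign and an elementary normalisation, $m_3(t_3(\tau))=\tfrac{3}{2\pi}\,\langle\operatorname{reg}\{x,y\},\gamma\rangle$. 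Proposition~\ref{T:Hyper2}(ii) serves as an independent cross-check of this identity via the hypergeometric expansion of the period.

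Next I would bring in the modular parameterisation. The function $(\eta(\tau)/\eta(3\tau))^{12}$ is a Hauptmodul for $X_0(3)$, so $t_3(\tau)$ parameterises the Hesse family, and under the induced uniformisation $E_k\cong\CC/(\ZZ+\ZZ\tau)$ (after rescaling) the coordinates $x$ and $y$ become modular units. The key geometric step is to pin down their divisors: $x=0$ forces $y^3=-1$, while $x=\infty$ picks out the three flexes of the Hesse cubic at infinity, which are $3$-torsion, and similarly for $y$. Thus $\operatorname{div}(x)$ and $\operatorname{div}(y)$ are supported on $\Gamma_0(3)$-orbits of torsion points of order dividing $6$, so modulo torsion $\{x,y\}$ is a rational multiple of a Beilinson--Bloch element supported at torsion. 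By Beilinson's explicit formula for the regulator of such an element, $\langle\operatorname{reg}\{x,y\},\gamma\rangle$ equals a finite $\QQ$-linear combination $\sum_P n_P\,D_{E_k}(P)$ of values of the elliptic Bloch--Wigner--Zagier function $D_{E_k}$ at the torsion points $P$ occurring above.

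The final step is to insert the Kronecker--Eisenstein expansion of $D_{E_k}$ --- a non-holomorphic weight-$2$ Eisenstein series whose value at $P=(a\tau+b)/N$ is built from sums of the shape $\sum_{(m,n)\neq(0,0)}e^{2\pi i(nb-ma)/N}\big/\big[(m\tau+n)^2|m\tau+n|^2\big]$ --- and to carry out the finite sum over the torsion points $P$. Because those points arise from cube (and sixth) roots of unity, the character sum over $(a,b)$ collapses the exponential factors into the Legendre symbol $\chi_{-3}(m)$, leaving exactly $\tfrac{81\sqrt{3}\,\Im(\tau)}{4\pi^2}\sideset{}{'}\sum_{m,n}\chi_{-3}(m)(m+3n\,\Re(\tau))\big/[(m+3n\tau)(m+3n\bar\tau)]^2$ once one tracks the normalisations: the rescaled lattice $\ZZ+3\ZZ\tau$, the factor $3$ in $m_3(t)=3m(Q_{t^{1/3}})$, the degree of the cover $X_0(3)\to X(1)$, and the period normalisation of $dx/x\wedge dy/y$. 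The right-hand side is manifestly real, so no $\Re(\cdot)$ is needed.

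I expect the main obstacle to be bookkeeping rather than conceptual: (i) computing the exact rational combination expressing $\{x,y\}$ through torsion-supported elements, which needs careful use of the group law on the Hesse cubic and of the $\Gamma_0(3)$-action on torsion; and (ii) pinning down every constant in Beilinson's formula and in the Kronecker--Eisenstein expansion simultaneously, with the correct Eisenstein-summation convention for conditional convergence --- one stray factor destroys the clean $81\sqrt{3}/4\pi^2$. One must also verify the topological claim that $\gamma$ is the right homology cycle exactly when $\tau\in\mathcal{F}$, i.e.\ that the ``open'' contribution to the Mahler integral vanishes there. A more computational alternative sidesteps $K$-theory altogether: differentiate $m_3(t_3(\tau))$ in $\tau$ using Proposition~\ref{T:Hyper2}(ii), recognise the derivative as a weight-$2$ Eisenstein series for $\Gamma_0(3)$ twisted by $\chi_{-3}$, integrate term by term, fix the constant of integration from the $\tau\to i\infty$ asymptotics (where $t_3\to\infty$), and re-sum the resulting Lambert-type series into the closed Eisenstein--Kronecker form; there the obstacle migrates to the re-summation and the $\Re(\cdot)$/constant bookkeeping.
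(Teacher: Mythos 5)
This proposition is imported verbatim from Rodriguez Villegas \cite[\S IV]{RV}; the paper supplies no proof of its own, so there is nothing internal to compare you against, and the fair comparison is with the source. Your sketch is a sound account of the two standard derivations, but note that the ``more computational alternative'' you relegate to the last sentence is in fact the proof Rodriguez Villegas actually gives: one writes $m_3(t_3(\tau))$ as $\Re$ of a $q$-series, observes that $\frac{1}{\pi i}\frac{d}{d\tau}m_3(t_3(\tau))$ is a holomorphic Eisenstein series of weight $3$ (not weight $2$ as you write --- the period $t\,dm_3/dt$ has weight $1$ and $\frac{1}{2\pi i}\,d\log t_3/d\tau$ has weight $2$, hence the degree-$1$ numerator over the degree-$4$ denominator in the Kronecker series), integrates the Lambert series term by term, and fixes the constant from the $\tau\to i\infty$ asymptotics. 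Your primary route through $K_2$, torsion-supported divisors of $x,y$ on the Hesse cubic, and Bloch's elliptic dilogarithm is the Deninger--Bloch picture and would also work, but it carries a genuine obstruction you only half-acknowledge: the proposition is asserted for all $\tau\in\mathcal{F}$, and $t_3$ maps $\mathcal{F}$ onto values with $|t_3|\le 27$ as well, where $Q_{t^{1/3}}$ vanishes on the torus, the Jensen/Deninger path $\gamma$ fails to close, and the ``open'' boundary contribution does \emph{not} vanish. In that range the regulator identification breaks down and one must fall back on analytic continuation of the $q$-expansion --- which is precisely why the Eisenstein-series derivation is the one that delivers the statement in the generality claimed. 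With that caveat, and granting the constant-chasing you flag, your plan is workable and consistent with the literature.
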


The remaining part of this section will be devoted to proving the following result:

\begin{theorem} \label{T:Main3}
If $t=6-6\sqrt[3]{2}+18\sqrt[3]{4},$ then 
$$m_3(t)=\frac{3}{2}\left(L'(f_{108},0)+L'(f_{36},0)-3L'(f_{27},0)\right),$$
where $f_{36}(\tau)=\eta^4(6\tau)\in S_2(\Gamma_0(36))$, $f_{27}(\tau)=\eta^2(3\tau) \eta^2(9\tau)\in S_2(\Gamma_0(27)),$ and $f_{108}(\tau)=q+5q^7-7q^{13}-q^{19}-5q^{25}-4q^{31}-q^{37}+\cdots$, the unique normalized newform in $S_2(\Gamma_0(108))$.
\end{theorem}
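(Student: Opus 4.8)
The plan is to play the same game as in the proof of Theorem~\ref{P:Main2}, but now in the Hesse family, where the available functional equation for $m_3$ is no longer of the simple ``two-term splits into a sum'' type. First I would record the known evaluations of $m_3$ at the CM points that will serve as the ``input'' of the functional equation. By Proposition~\ref{T:RV} the value $m_3(t)$ is an Eisenstein--Kronecker series over the lattice $\ZZ+3\tau\ZZ$ with the character $\chi_{-3}$, and for the three special arguments $t=0$ ($\tau$ the elliptic point of $\Gamma_0(3)$), $t=27$ (the cusp), and the value of $t_3$ at $\tau$ corresponding to the order of discriminant $-27$ or $-108$ in $\QQ(\sqrt{-3})$, these series collapse, via the classical Hecke/Damerell-type formulas for Eisenstein--Kronecker numbers at CM points, to $L'(f,0)$ of the associated weight-$2$ newforms of levels $27$, $36$, $108$. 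Concretely I expect to establish (or to cite from \cite{RV} and the literature on CM elliptic curves of conductor $27,36,108$) identities of the shape $m_3(27)=\alpha L'(f_{36},0)$, $m_3(0)=\beta L'(f_{27},0)$, and $m_3(t_0)=\gamma L'(f_{108},0)$ for explicit rational $\alpha,\beta,\gamma$, where $t_0$ is the argument attached to the relevant CM point.

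Next I would locate the right functional equation. The modular parametrization $t_3(\tau)=27+(\eta(\tau)/\eta(3\tau))^{12}$ is a Hauptmodul for $\Gamma_0(3)$, and the Atkin--Lehner involution $W_3$ together with the degree-$2$ Fricke-type correspondences on $X_0(3)$ induces an algebraic relation among $t_3(\tau)$, $t_3(3\tau)$ (or $t_3(W_3\tau)$) and $t_3$ at the relevant sublattice index, which transcribes into a linear relation among the corresponding $m_3$ values exactly as \eqref{E:KO} and \eqref{E:LR} do for $m_2$. I would run the cubic change of variable $k\mapsto$ (a root of a resolvent cubic) through that relation so that one of the three terms becomes $m_3(6-6\sqrt[3]{2}+18\sqrt[3]{4})$ and the other two become the CM values $m_3(27)$, $m_3(0)$, $m_3(t_0)$ identified in the previous step; the appearance of the $\sqrt[3]{2}$, $\sqrt[3]{4}$ radicals is precisely the signature of having solved such a cubic, just as $\sqrt{2}$ appeared in Theorem~\ref{P:Main2}. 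Equating the two sides and solving for the unknown term then yields the claimed formula with the coefficients $\tfrac32,\tfrac32,-\tfrac92$ (i.e.\ the overall $\tfrac32(\,\cdot+\,\cdot-3\,\cdot)$).

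The main obstacle I anticipate is producing the correct three-term (or possibly four-term) functional equation for $m_3$ with \emph{algebraic} rather than rational arguments and verifying that the degree-$3$ base point $6-6\sqrt[3]{2}+18\sqrt[3]{4}$ actually occurs among its arguments: unlike the $m_2$ case, where Kurokawa--Ochiai and Lal\'in--Rogers already supply \eqref{E:KO} and \eqref{E:LR} off the shelf, here I would need to derive the analogue from the geometry of $X_0(3)$ (e.g.\ from the modular equation of level $3$ for $t_3$, or from a hypergeometric transformation for the ${}_4F_3$ in Proposition~\ref{T:Hyper2}(ii) of the type underlying \eqref{E:KO}), and then check that the radical expression is a genuine value of the relevant Hauptmodul. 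A secondary technical point is pinning down the exact rational constants in the CM evaluations $m_3(27),m_3(0),m_3(t_0)$ in terms of $L'(f_{36},0),L'(f_{27},0),L'(f_{108},0)$, including getting the signs right; this is where I would lean on the Eisenstein--Kronecker machinery of \cite{RV} and on identifying the CM elliptic curves by conductor. Once those two ingredients are in hand, the deduction is a short linear-algebra step, in complete parallel with the proof of Theorem~\ref{P:Main2}, and one can additionally read off the corresponding ${}_4F_3$ evaluation at $27/(6-6\sqrt[3]{2}+18\sqrt[3]{4})$ from Proposition~\ref{T:Hyper2}(ii).
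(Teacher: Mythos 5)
Your proposal takes a genuinely different route from the paper, and as it stands it has a gap that I do not think can be repaired along the lines you sketch. The paper does not use any functional equation for $m_3$ at all: it observes that $t_3(\sqrt{-3}/9)=6-6\sqrt[3]{2}+18\sqrt[3]{4}$ (verified via the $j$-invariant and Weber's value of $\mathfrak{f}^3(\sqrt{-27})$), plugs $\tau_0=\sqrt{-3}/9$ directly into the Eisenstein--Kronecker series of Proposition~\ref{T:RV}, and then splits the resulting lattice sum over $\ZZ^2$ according to whether $3\mid n$ or $3\nmid n$. The piece with $3\mid n$ is matched to $L(f_{36},2)$ (Lemma~\ref{L:f36}, via the known evaluation $m_3(54)$ at $\tau=\sqrt{-3}/3$), and the piece with $3\nmid n$ is matched to $L(f_{108},2)-\tfrac34 L(f_{27},2)$ by writing $f_{108}$ and $f_{27}$ explicitly as Gr\"ossencharacter theta series supported on congruence classes mod $6$ (Lemmas~\ref{L:f108}--\ref{L:f108f27}); the functional equation of $L(f,s)$ then converts $L(f,2)$ into $L'(f,0)$.

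The gap in your plan is twofold. First, the input evaluations you posit do not all exist: there is no explicit $t_0$ with $m_3(t_0)=\gamma L'(f_{108},0)$ for a single newform, because the order of discriminant $-108$ has class number $3$ --- this is exactly why the singular value of $t_3$ is a cubic irrationality and why \emph{every} Mahler measure at these points is forced to be a combination of three $L$-values (compare the conjectural formulas for the two Galois-conjugate arguments following Theorem~\ref{T:Main3}, which carry different coefficient patterns). Second, the level-$3$ modular equation (or Hecke correspondence) you invoke would relate $m_3(t_3(3\tau_0))=m_3(54)$ to the \emph{sum} of $m_3$ over all roots of the modular polynomial, i.e.\ over the full set of cubic conjugates; it cannot isolate the single term $m_3(6-6\sqrt[3]{2}+18\sqrt[3]{4})$. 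In the $m_2$ case the separation succeeds only because there are two quadratic conjugates and two independent functional equations \eqref{E:KO} and \eqref{E:LR}; here you would need at least two more independent relations among the three conjugates, and you do not produce them. (The paper itself remarks, apropos of \eqref{E:WZ}, that such sum-over-conjugates identities are provable while the individual terms are not known to be.) If you want to salvage the argument you essentially have to abandon the functional-equation route and evaluate the Eisenstein--Kronecker series at the CM point directly, which is what the paper does.
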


Applying Theorem~\ref{T:Hyper2}, Proposition~\ref{P:Main2}, and Theorem~\ref{T:Main3}, one obtains the following hypergeometric evaluation formulas immediately:
\begin{corollary}\label{C:hyperL}
The following identities hold:
\begin{align*}
\pFq{4}{3}{\frac{3}{2},\frac{3}{2},1,1}{2,2,2}{-16+12\sqrt{2}}&=\frac{4+ 3\sqrt{2}}{2}\left(\log(8+6\sqrt{2})-(L'(f_{64},0)+ L'(f_{32},0))\right),\\
\pFq{4}{3}{\frac{4}{3},\frac{5}{3},1,1}{2,2,2}{\frac{63+171\sqrt[3]{2}-18\sqrt[3]{4}}{250}}&=\left(1-\sqrt[3]{2}+3\sqrt[3]{4}\right)\biggl(\log(6-6\sqrt[3]{2}+18\sqrt[3]{4})\\&\qquad -\frac{3}{2}\left(L'(f_{108},0)+L'(f_{36},0)-3L'(f_{27},0)\right)\biggr).
\end{align*}
\end{corollary}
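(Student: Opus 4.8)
The plan is to read each identity directly off the hypergeometric representation in Proposition~\ref{T:Hyper2} by substituting the distinguished value of $t$ and then eliminating the Mahler measure in favour of the $L$-value formula already established. No new analytic input is needed: the entire content is the algebraic simplification of the hypergeometric argument $16/t$ (resp.\ $27/t$) and of the prefactor $4/t$ (resp.\ $6/t$) in the appropriate number field, together with a convergence/validity check that lets us discard the real-part operator in Proposition~\ref{T:Hyper2}.

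For the first identity I would set $t=8+6\sqrt{2}$ in Proposition~\ref{T:Hyper2}(i). Rationalizing in $\QQ(\sqrt{2})$ gives $16/t=-16+12\sqrt{2}$ and $t/4=(4+3\sqrt{2})/2$; since $t$ is a positive real and $-16+12\sqrt{2}\approx 0.97\in(0,1)$ the series converges to a real number, so the $\Re$ in Proposition~\ref{T:Hyper2}(i) is superfluous and
\[
m_2(8+6\sqrt{2})=\log(8+6\sqrt{2})-\frac{4}{t}\,\pFq{4}{3}{\frac{3}{2},\frac{3}{2},1,1}{2,2,2}{-16+12\sqrt{2}}.
\]
Substituting $m_2(8+6\sqrt{2})=L'(f_{64},0)+L'(f_{32},0)$ from \eqref{E:Main21} and solving for the ${}_4F_3$ (i.e.\ multiplying by $t/4=(4+3\sqrt{2})/2$) yields the first stated formula.

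For the second identity I would set $t=6-6\sqrt[3]{2}+18\sqrt[3]{4}$ in Proposition~\ref{T:Hyper2}(ii). The one genuine computation is the inversion of $1-\sqrt[3]{2}+3\sqrt[3]{4}$ in $\QQ(\sqrt[3]{2})$: writing $t=6\bigl(1-\sqrt[3]{2}+3\sqrt[3]{4}\bigr)$ and solving the $3\times3$ linear system arising from $(1-\alpha+3\alpha^2)(a+b\alpha+c\alpha^2)=1$ with $\alpha^3=2$ gives $(1-\alpha+3\alpha^2)^{-1}=(7+19\alpha-2\alpha^2)/125$, whence $27/t=\tfrac{9}{2}(7+19\sqrt[3]{2}-2\sqrt[3]{4})/125=(63+171\sqrt[3]{2}-18\sqrt[3]{4})/250$ and $t/6=1-\sqrt[3]{2}+3\sqrt[3]{4}$. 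Substituting $m_3(t)=\frac{3}{2}\bigl(L'(f_{108},0)+L'(f_{36},0)-3L'(f_{27},0)\bigr)$ from Theorem~\ref{T:Main3} and multiplying by $t/6=1-\sqrt[3]{2}+3\sqrt[3]{4}$ then gives the second stated formula.

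The only delicate point is that here $t$ sits barely above the convergence threshold $27$ (numerically $t\approx 27.014$), so I must certify the hypothesis $|t|\ge 27$ of Proposition~\ref{T:Hyper2}(ii) rigorously rather than numerically. Writing $s=\sqrt[3]{2}$, the inequality $t\ge 27$ reduces to $18\sqrt[3]{4}-6\sqrt[3]{2}-21\ge 0$, i.e.\ $6s^{2}-2s-7\ge 0$, which holds because $\sqrt[3]{2}$ exceeds the positive root $(1+\sqrt{43})/6$ of this quadratic (equivalently $(6\sqrt[3]{2}-1)^{2}>43$, checked by comparing $36\sqrt[3]{4}-12\sqrt[3]{2}+1$ with $43$). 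Apart from this boundary check, everything reduces to the field arithmetic above and direct substitution, so I expect no substantive obstacle.
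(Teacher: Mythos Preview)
Your approach is exactly the one the paper takes: it says the corollary follows ``immediately'' from Proposition~\ref{T:Hyper2}, Theorem~\ref{P:Main2}, and Theorem~\ref{T:Main3}, and you have correctly spelled out the substitution and the field arithmetic (in $\QQ(\sqrt{2})$ and $\QQ(\sqrt[3]{2})$) that makes this explicit.

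One small slip: your verification that $t\ge 27$ in the second identity is circular. You reduce $6s^2-2s-7\ge 0$ (with $s=\sqrt[3]{2}$) to $(6\sqrt[3]{2}-1)^2>43$, i.e.\ $36\sqrt[3]{4}-12\sqrt[3]{2}+1>43$, but this is identically the inequality $6s^2-2s-7>0$ again. To close it, use rational bounds obtained by cubing: from $1.26^3>2$ and $1.587^3<4$ one gets $\sqrt[3]{2}<1.26$ and $\sqrt[3]{4}>1.587$, hence $t=6-6\sqrt[3]{2}+18\sqrt[3]{4}>6-7.56+28.566=27.006>27$. With that patch the argument is complete.
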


To establish Theorem~\ref{T:Main3}, we require some identities for $L$-values of the involved cusp forms, which will be verified in the following lemmas.

\begin{lemma}\label{L:f36}
Let $f_{36}(\tau)$ be as defined in Theorem~\ref{T:Main3}. Then the following equality holds:
\begin{equation*}
L(f_{36},2)=\frac{1}{2}\sideset{}{'}\sum_{m,n\in\mathbb{Z}}\frac{m\chi_{-3}(m)}{(m^2+3n^2)^2}.
\end{equation*}
\end{lemma}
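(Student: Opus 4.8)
The plan is to identify the $L$-value $L(f_{36},2)$ with a Hecke $L$-value for a Grossencharacter on $\QQ(\sqrt{-3})$, and then rewrite the latter as the displayed Eisenstein--Kronecker-type sum. The elliptic curve attached to $f_{36}=\eta^4(6\tau)$ is $X_0(36)\colon y^2=x^3+1$, which has complex multiplication by $\ZZ[\zeta_3]$. Hence $L(f_{36},s)=L(\psi,s)$ for a Hecke character $\psi$ of $\QQ(\sqrt{-3})$ of conductor dividing $6$ and infinity type $1$; concretely, $\psi((\alpha))$ is a fixed sixth-root-of-unity multiple of $\alpha$, pinned down by the congruence class of $\alpha$ modulo the conductor. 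The first step is therefore to make this Hecke character completely explicit, so that $L(\psi,2)=\sum_{\mathfrak{a}}\psi(\mathfrak{a})N\mathfrak{a}^{-2}$ becomes a sum over (a set of representatives of) integral ideals.

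Next I would convert the ideal sum into a sum over lattice points. Since $\QQ(\sqrt{-3})$ has class number one, every ideal is principal, generated by some $\alpha\in\ZZ[\zeta_3]$ uniquely up to the six units; writing $\alpha=a+b\omega$ in a suitable $\ZZ$-basis and tracking how $\psi$ depends on $\alpha \bmod (\text{conductor})$, the six unit translates of each $\alpha$ contribute equally (because $\psi$ has infinity type one and the units are sixth roots of unity, the factors $\psi(u)\overline{u}/|u|$ combine correctly), which is exactly what produces the $\tfrac12$ in front and collapses the sum to one over all $(m,n)\neq(0,0)$. The norm form $N(\alpha)$ should come out proportional to $m^2+3n^2$ after the change of basis $m=2a+b$, $n=b$ (or similar), matching the denominator $(m^2+3n^2)^2$; and the character value $\psi$ on the class of $\alpha$ should reduce to $\chi_{-3}(m)$ times a sign depending on $\Re$ vs.\ the full $\alpha$, matching the numerator $m\chi_{-3}(m)$. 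This is essentially Rodriguez Villegas's computation in \cite[\S IV]{RV}, so I would lean on Proposition~\ref{T:RV} and the companion computations there, specializing the Eisenstein--Kronecker series at the CM point $\tau$ for which $t_3(\tau)$ gives the curve $y^2=x^3+1$, i.e.\ $\tau$ with $\Re(\tau)=0$ and $3\TT m(\tau)^2$ a specific value, so that $(m+3n\tau)(m+3n\bar\tau)=m^2+9n^2|\tau|^2$ is a rational multiple of $m^2+3n^2$; the normalization constant is then fixed by comparing the completed $L$-function's functional equation (equivalently by one numerical check to $25$ digits).

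The main obstacle is bookkeeping the normalizations precisely: getting the exact constant relating $L(f_{36},2)$ to the Eisenstein--Kronecker series at the relevant CM point (this involves the period of $y^2=x^3+1$, a Gamma-factor/$\Gamma(1/3)$ combination, and the discriminant of the order), and simultaneously verifying that the CM point really does lie in the fundamental domain $\mathcal{F}$ of Proposition~\ref{T:RV} so that the formula there applies verbatim. A clean way to sidestep the period computation is to instead prove the identity up to a rational constant by the CM/lattice argument, and pin the constant to be exactly $\tfrac12$ by matching the leading Fourier coefficient or by a high-precision numerical evaluation; given the paper's conventions ($\stackrel{?}{=}$ allowed elsewhere but this is stated as an exact lemma) I expect the intended argument supplies the constant rigorously via the known correspondence $f_{36}\leftrightarrow y^2=x^3+1\leftrightarrow\psi$ together with Deninger--Rodriguez Villegas-style period relations, and that is the step I would write out most carefully.
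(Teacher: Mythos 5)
Your route is genuinely different from the paper's. The paper never touches the Hecke character for $f_{36}$ directly: it evaluates the Mahler measure $m_3(54)$ in two independent ways and equates the answers. Specifically, from $\eta(-1/\tau)=\sqrt{-i\tau}\,\eta(\tau)$ one gets $t_3(\sqrt{-3}/3)=54$, so Proposition~\ref{T:RV} at $\tau=\sqrt{-3}/3$ gives $m_3(54)=\frac{81}{4\pi^2}\sideset{}{'}\sum_{m,n}\frac{m\chi_{-3}(m)}{(m^2+3n^2)^2}$ with the constant already explicit; on the other hand Rogers \cite[Thm.~2.1, Thm.~5.2]{RogersHyper} proves $m_3(54)=\frac{81}{2\pi^2}L(f_{36},2)$. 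Dividing out $\frac{81}{4\pi^2}$ yields the lemma with no period, Gamma-factor, or unit-group bookkeeping at all --- exactly the normalization issues you single out as the main obstacle simply never arise. Your CM/theta-series approach is viable and is in fact how the paper treats the harder forms $f_{108}$ and $f_{27}$ in Lemmas~\ref{L:f108} and \ref{L:f27} (construct the Gr\"ossencharacter, show the associated theta series is the newform by rationality of its coefficients, then take the Mellin transform); for $f_{36}$ the cleanest version of your plan is to prove the $q$-expansion identity $2f_{36}(\tau)=\sideset{}{'}\sum_{m,n}m\chi_{-3}(m)q^{m^2+3n^2}$ and apply the Mellin transform, which buys a self-contained proof independent of Rogers' Mahler-measure evaluation. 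The one caveat: as written you leave the constant to be pinned down either by ``a high-precision numerical evaluation'' (not a proof) or by matching the leading Fourier coefficient; the latter is rigorous only once you have established that your weighted theta series is itself a cusp form in $S_2(\Gamma_0(36))$ proportional to $f_{36}$ (e.g.\ via \cite[Thm.~1.31]{Ono} as the paper does at level $108$), so that step needs to be stated and justified rather than gestured at.
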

\begin{proof}
First, note that for any $\tau$ in the upper half plane $\eta(\tau)$ satisfies the functional equation
$$\eta\left(-\frac{1}{\tau}\right)=\sqrt{-i\tau}\eta(\tau).$$ Hence it is easily seen that 
$$\frac{\eta\left(\frac{\sqrt{-3}}{3}\right)}{\eta\left(\sqrt{-3}\right)}=3^\frac{1}{4},$$
which implies that $\displaystyle t_3\left(\frac{\sqrt{-3}}{3}\right)=54.$ Thus we have from Theorem~\ref{T:RV} that 
\begin{equation*}
m_3(54)=\frac{81}{4\pi^2}\sideset{}{'}\sum_{m,n\in\mathbb{Z}}\frac{m\chi_{-3}(m)}{(m^2+3n^2)^2}.
\end{equation*}
On the other hand, Rogers \cite[Thm.~2.1, Thm.~5.2]{RogersHyper} proved that 
\begin{equation*}
m_3(54)=\frac{81}{2\pi^2}L(f_{36},2),
\end{equation*}
whence the lemma follows. 
\end{proof}

\begin{lemma}\label{L:f108}
Let $f_{108}(\tau)$ be the unique normalized newform with rational coefficients in $S_2(\Gamma_0(108))$, and let $\mathcal{A}=\{(m,n)\in \mathbb{Z}^2 \mid (m,n) \equiv (-1,-2),(2,1),(1,0),(-2,3) \mod 6\}.$ Then 
\begin{equation*}
L(f_{108},2)=\sum_{m,n\in\mathcal{A}}\frac{m+3n}{(m^2+3n^2)^2}.
\end{equation*}
\end{lemma}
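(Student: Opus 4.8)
The plan is to exploit the fact that $f_{108}$ has complex multiplication by $K=\QQ(\sqrt{-3})$: its Fourier coefficients vanish at every prime $p\equiv 2\pmod 3$ (and $a_2=a_3=0$), so $f_{108}$ is the theta series attached to a Hecke Gr\"ossencharacter $\psi$ of $K$ of infinity type $\psi_\infty(\alpha)=\alpha$. Writing $\omega=e^{2\pi i/3}$, so that $\mathcal{O}_K=\ZZ[\omega]$ has class number one and six units, the relation $108=|d_K|\cdot N\mathfrak{f}=3\,N\mathfrak{f}$ forces the conductor to be $\mathfrak{f}=(6)$ (the only norm‑$36$ ideal), and the finite part $\varepsilon\colon(\ZZ[\omega]/6)^{\times}\to\CC^{\times}$ of $\psi$ is pinned down by a couple of Euler factors (e.g.\ $a_7=5$, $a_{13}=-7$): one finds $\varepsilon(u)=u^{-1}$ on units (forced by well‑definedness on ideals), $\varepsilon(1)=1$, and $\varepsilon(-1-2\sqrt{-3})=\tfrac{1-\sqrt{-3}}{2}$. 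Then $L(f_{108},s)=L(\psi,s)=\sum_{\mathfrak{a}}\psi(\mathfrak{a})(N\mathfrak{a})^{-s}$, summed over nonzero integral ideals of $\ZZ[\omega]$, only those coprime to $(6)$ contributing, and the lemma becomes the case $s=2$ of a bookkeeping identity.

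To set up that identity I would pass from ideals to elements. Each ideal coprime to $(6)$ is $(\alpha)$ with $\alpha$ unique up to the six units; I would check that when $\alpha$ is coprime to $(2)$ exactly two of its six associates lie in $\ZZ+\ZZ\sqrt{-3}$, forming a pair $\pm(m+n\sqrt{-3})$, and that the residue of $m+n\sqrt{-3}$ modulo $(6)$ falls into one of the three cosets of the image of $\ZZ[\omega]^{\times}$ in $(\ZZ[\omega]/6)^{\times}$ (a group of order $18$ in which the units inject with index $3$). The set $\mathcal{A}$ is exactly the set of $(m,n)$ with $m+n\sqrt{-3}\equiv 1\pmod 6$ or $m+n\sqrt{-3}\equiv -1-2\sqrt{-3}\pmod 6$; thus it names one distinguished generator for the ideals of the trivial coset and one for the ideals of a second coset, while the ideals of the third coset --- which are exactly the complex conjugates of those of the second --- are not named directly.

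Finally I would reassemble $L(\psi,2)=\sum_{\mathfrak{a}}\psi(\mathfrak{a})(N\mathfrak{a})^{-2}$. Complex conjugation of ideals fixes the trivial coset and interchanges the second and third, and $\overline{\psi(\mathfrak{a})}=\psi(\overline{\mathfrak{a}})$, $N\overline{\mathfrak{a}}=N\mathfrak{a}$, so $L(\psi,2)$ equals the contribution of the trivial coset plus twice the real part of the contribution of the second. On a trivial‑coset ideal the distinguished generator satisfies $\psi\big((m+n\sqrt{-3})\big)=m+n\sqrt{-3}$ (as $\varepsilon(1)=1$); the part of $\mathcal{A}$ naming these ideals is stable under $(m,n)\mapsto(m,-n)$, which both kills the imaginary part and annihilates $\sum 3n\,(m^2+3n^2)^{-2}$, so this contribution equals $\sum(m+3n)(m^2+3n^2)^{-2}$ over that part of $\mathcal{A}$. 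On a second‑coset ideal $\psi\big((m+n\sqrt{-3})\big)=\tfrac{1-\sqrt{-3}}{2}(m+n\sqrt{-3})$, and since $2\Re\!\big(\tfrac{1-\sqrt{-3}}{2}(m+n\sqrt{-3})\big)=m+3n$, twice its real part contributes $\sum(m+3n)(m^2+3n^2)^{-2}$ over the complementary part of $\mathcal{A}$. Adding the two pieces gives $L(f_{108},2)=\sum_{(m,n)\in\mathcal{A}}(m+3n)(m^2+3n^2)^{-2}$, as claimed.

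I expect the main obstacle to be the first step --- determining the exact conductor $(6)$ and the precise finite character $\varepsilon$, in particular the choice of $\tfrac{1-\sqrt{-3}}{2}$ rather than its complex conjugate; this depends on the fixed embedding $\sqrt{-3}\in\CC$ and on the normalization $f_{108}=q+5q^7-7q^{13}-\cdots$, and is really what makes the numerator $m+3n$ (rather than, say, $m-3n$) appear. Once $\psi$ is fixed, the manipulations with the six units, residues modulo $(6)$, and complex conjugation are delicate but elementary, and the decomposition of $\mathcal{A}$ into its trivial‑coset and second‑coset parts can be verified directly against small coefficients: the elements of norm $1,7,13,25,\dots$ reproduce $a_1,a_7,a_{13},a_{25},\dots$.
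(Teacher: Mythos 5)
Your proposal is correct and follows essentially the same route as the paper: both identify $f_{108}$ with the theta series of a weight-$2$ Hecke Gr\"ossencharacter of $\mathbb{Q}(\sqrt{-3})$ of conductor $(6)=(3+3\sqrt{-3})$ and then convert the ideal sum for $L(\psi,2)$ into the lattice sum over the congruence classes making up $\mathcal{A}$ (the paper writes the character by an explicit two-case formula on generators $m+n\sqrt{-3}$, checks multiplicativity, and invokes Ono's theorem plus uniqueness of the normalized newform in $S_2(\Gamma_0(108))$, which is also the step your sketch ultimately needs to make the "$f_{108}$ has CM" starting point rigorous). The only differences are organizational: you pin down $\varepsilon$ from the conductor--level relation and a few Fourier coefficients and sort ideals by cosets of the units in $(\mathcal{O}_K/6)^{\times}$ under complex conjugation, whereas the paper sorts generators by residues mod $6$ directly.
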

\begin{proof}
By taking the Mellin transform of the newform, it suffices to prove that 
\begin{equation}\label{E:f108}
f_{108}(\tau)=\sum_{m,n\in\mathcal{A}}(m+3n)q^{m^2+3n^2}.
\end{equation}
Let $K=\mathbb{Q}(\sqrt{-3}), \mathcal{O}_{K}=\mathbb{Z}\left[\frac{1+\sqrt{-3}}{2}\right], \Lambda=(3+3\sqrt{-3})\subset \mathcal{O}_K$, and $I(\Lambda)$ = the group of fractional ideals of $\mathcal{O}_K$ coprime to $\Lambda$. Since $\Lambda$ can be factorized as 
\begin{equation*}
\Lambda = \left(\frac{1+\sqrt{-3}}{2}\right)(\sqrt{-3})^2(2), 
\end{equation*}
any integral ideal $\mathfrak{a}$ is coprime to $\Lambda$ if and only if $(\sqrt{-3})\nmid \mathfrak{a}$ and $(2)\nmid \mathfrak{a}$. As a consequence, every integral ideal coprime to $\Lambda$ is uniquely represented by $(m+n\sqrt{-3})$, where $m,n\in\mathbb{Z}, m>0, 3\nmid m,$ and $m \not\equiv n \pmod 2.$ Let $P(\Lambda)$ denote the monoid of integral ideals coprime to $\Lambda$.

Define $\varphi:P(\Lambda)\rightarrow \mathbb{C}^{\times}$ by 
\begin{align*}
\varphi((m+n\sqrt{-3}))= 
	\begin{cases} 
	\displaystyle
	\frac{-\chi_{-3}(m) m+\chi_{-3}(n)(3n)-(\chi_{-3}(n)m+\chi_{-3}(m)n)\sqrt{-3}}{2} & \mbox{if } 3\nmid n, \\ 			
	\chi_{-3}(m)(m+n\sqrt{-3}) & \mbox{if } 3|n. 
	\end{cases}
\end{align*}
Then it is not difficult to check that $\varphi$ is multiplicative, and for each $(m+n\sqrt{-3})\in P(\Lambda)$ with $m+n\sqrt{-3}\equiv 1 \pmod \Lambda,$
$$\varphi((m+n\sqrt{-3}))=m+n\sqrt{-3}.$$
Hence we can extend $\varphi$ multiplicatively to define a Hecke Gr\"{o}ssencharacter of weight $2$ and conductor $\Lambda$ on $I(\Lambda).$ Now if we let $$\Psi(\tau):=\sum_{\mathfrak{a}\in P(\Lambda)}\varphi(\mathfrak{a})q^{N(\mathfrak{a})},$$
then one sees from \cite[Thm.~1.31]{Ono} that $\Psi(\tau)$ is a newform in $S_2(\Gamma_0(108)).$ Observe that 
\begin{align*}
\varphi((m+n\sqrt{-3}))+\varphi((m-n\sqrt{-3}))=
	\begin{cases} 
	-\chi_{-3}(m) m+\chi_{-3}(n)(3n) & \mbox{if } 3\nmid n, \\ 			
	2\chi_{-3}(m)m & \mbox{if } 3|n, 
	\end{cases}
\end{align*}
so we have 
\begin{equation*}
\Psi(\tau)=\sum_{\substack{m,n\in\mathbb{N} \\ 3\nmid m, 3\nmid n \\ m\not\equiv n \pmod 2}}(-\chi_{-3}(m) m+\chi_{-3}(n)(3n))q^{m^2+3n^2}+\sum_{\substack{m\in\mathbb{N}, n\in\mathbb{Z}\\ 3\nmid m, 3|n \\ m\not\equiv n \pmod 2}}\chi_{-3}(m)m q^{m^2+3n^2}. 
\end{equation*}
Working modulo $6$, one can show that 
\begin{equation*}
\sum_{\substack{m,n\in\mathbb{N} \\ 3\nmid m, 3\nmid n \\ m\not\equiv n \pmod 2}}(-\chi_{-3}(m) m+\chi_{-3}(n)(3n))q^{m^2+3n^2} = \sum_{\substack{m,n\in\mathbb{Z} \\ (m,n)\equiv (-1,2),(2,1) \\ \pmod 6}}( m+3n)q^{m^2+3n^2}, 
\end{equation*}
and 
\begin{align*}
\sum_{\substack{m\in\mathbb{N}, n\in\mathbb{Z}\\ 3\nmid m, 3|n \\ m\not\equiv n \pmod 2}}\chi_{-3}(m)m q^{m^2+3n^2}&= \sum_{\substack{m,n\in\mathbb{Z}\\(m,n)\equiv (1,0),(-2,3) \\ \pmod 6}}mq^{m^2+3n^2}\\
&=\sum_{\substack{m,n\in\mathbb{Z}\\(m,n)\equiv (1,0),(-2,3) \\ \pmod 6}}(m+3n)q^{m^2+3n^2}.
\end{align*}
Consequently, the coefficients of $\Psi(\tau)$ are rational, which implies that $\Psi(\tau)=f_{108}(\tau)$, and \eqref{E:f108} holds. (One can check using, for example, \texttt{Sage} or \texttt{Magma} that there is only one normalized newform in $S_2(\Gamma_0(108))$.)
\end{proof}

\begin{lemma}\label{L:f27}
Let $f_{27}(\tau)$ be as defined in Theorem~\ref{T:Main3}, and let $\mathcal{B}=\{(m,n)\in \mathbb{Z}^2 \mid (m,n) \equiv (1,0),(-2,3),(1,-1),(-2,2),(2,-1),(-1,2) \mod 6\}.$ Then 
\begin{equation*}
L(f_{27},2)=\sideset{}{'}\sum_{m,n\in\mathcal{B}}\frac{m+3n}{(m^2+3n^2)^2}.
\end{equation*}
\end{lemma}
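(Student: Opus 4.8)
The plan is to mimic the proof of Lemma~\ref{L:f108}: realize $f_{27}$ as the theta series of a Hecke Gr\"ossencharacter of $K=\mathbb{Q}(\sqrt{-3})$ and then reorganize it into the stated lattice sum. Taking the Mellin transform of $f_{27}$ and collecting terms according to the value of $m^{2}+3n^{2}$, it suffices to prove the $q$-expansion identity
\[
f_{27}(\tau)=\sideset{}{'}\sum_{m,n\in\mathcal{B}}(m+3n)\,q^{m^{2}+3n^{2}},
\]
the series on the right converging absolutely for $|q|<1$; evaluating the associated Dirichlet series at $s=2$ then gives the lemma.

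To build the character, note that $27=|d_{K}|\cdot N((3))$ with $(3)=(\sqrt{-3})^{2}$, which suggests weight $2$ and conductor $(3)$. Writing $\mathcal{O}_{K}=\mathbb{Z}[\omega]$, $\omega=\tfrac{1}{2}(1+\sqrt{-3})$, one checks that reduction carries the unit group $\mu_{6}=\mathcal{O}_{K}^{\times}$ isomorphically onto $(\mathcal{O}_{K}/(3))^{\times}$ (whereas reduction to $(\mathcal{O}_{K}/(\sqrt{-3}))^{\times}\cong\mathbb{F}_{3}^{\times}$ is no longer injective). Hence for each $\alpha\in\mathcal{O}_{K}$ coprime to $\sqrt{-3}$ there is a unique $\varepsilon(\alpha)\in\mu_{6}$ with $\varepsilon(\alpha)\alpha\equiv 1\pmod 3$, and $\psi((\alpha)):=\varepsilon(\alpha)\alpha$ is a well-defined Hecke Gr\"ossencharacter of weight $2$ and conductor exactly $(3)$. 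Its restriction to $\mathbb{Z}$ is the Dirichlet character $\chi_{-3}$, so by \cite[Thm.~1.31]{Ono} the theta series $\Psi(\tau):=\sum_{(\sqrt{-3})\nmid\mathfrak{a}}\psi(\mathfrak{a})\,q^{N(\mathfrak{a})}$ is a normalized Hecke eigenform of weight $2$ and level $27$ with nebentypus $\chi_{-3}^{2}=\mathbf{1}$. Since $\dim S_{2}(\Gamma_{0}(27))=1$, we get $\Psi=f_{27}$.

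It remains to identify $\Psi$ with the lattice sum over $\mathcal{B}$. Because $\mathbb{Z}[\sqrt{-3}]$ is the order of conductor $2$ in $\mathcal{O}_{K}$ and $\mu_{6}$ surjects onto $(\mathcal{O}_{K}/(3))^{\times}$, every ideal $\mathfrak{a}$ coprime to $\sqrt{-3}$ has a generator $m+n\sqrt{-3}$ with $m,n\in\mathbb{Z}$, so $N(\mathfrak{a})=m^{2}+3n^{2}$, and $\varepsilon(m+n\sqrt{-3})$ depends only on $(m\bmod 3,\,n\bmod 3)$. Unlike in Lemma~\ref{L:f108}, this generator is unique up to sign only when $\mathfrak{a}$ is also coprime to $(2)$; when $(2)^{k}\,\|\,\mathfrak{a}$ with $k\geq 1$, more of the six associates $u(m+n\sqrt{-3})$, $u\in\mu_{6}$, lie back in $\mathbb{Z}[\sqrt{-3}]$, so a given ideal is represented by several lattice points. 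Carrying out the case analysis — splitting according to $3\mid n$ versus $3\nmid n$ and according to $k$, inserting the explicit values of $\varepsilon$, pairing each ideal with its conjugate, and reducing the congruence conditions modulo $6$ exactly as in the last two displays of the proof of Lemma~\ref{L:f108} — one finds that $\Psi$ is supported precisely on the six residue classes forming $\mathcal{B}$ (two of them, $(1,0)$ and $(-2,3)$, coming from the $3\mid n$ ideals just as for $f_{108}$, the remaining four from the $3\nmid n$ ideals), and that the net contribution of each class to the coefficient of $q^{m^{2}+3n^{2}}$ is $m+3n$. This yields the displayed identity, hence the lemma.

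The main obstacle is this last step: checking that the explicit $\varepsilon$, together with the correct multiplicity count when $(2)\mid\mathfrak{a}$, reproduces exactly the six congruence classes defining $\mathcal{B}$ with the stated weights $m+3n$. No new idea is needed, but the $2$-adic ambiguity — absent from Lemma~\ref{L:f108}, where the ideals were taken coprime to $(6)$ — makes the bookkeeping appreciably more delicate. Alternatively, one could try to proceed as in the proof of Lemma~\ref{L:f36}, by locating a CM point $\tau^{\ast}$ with $t_{3}(\tau^{\ast})$ explicit and $m_{3}(t_{3}(\tau^{\ast}))$ a known rational multiple of $L(f_{27},2)$, and then reading off the Eisenstein--Kronecker expansion from Proposition~\ref{T:RV}; this presupposes that such a Mahler measure evaluation is available.
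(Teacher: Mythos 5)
Your strategy is legitimate but it is not the paper's, and as written it stops short at the decisive step. The paper does not reconstruct $f_{27}$ as a Gr\"ossencharacter theta series at all: it quotes the already-known expansion \eqref{E:Rf27} of $f_{27}$ over the classes $(1,1)$ and $(-2,-2)$ modulo $6$ (from Rogers), and then proves the lemma by two purely combinatorial claims --- explicit substitutions such as $(m,n)\mapsto(k+3l,\,k-l)$, together with the symmetry observation that $\sum(3m+3n)q^{m^2+3n^2}=0$ over the classes $(1,-1),(-2,2)$ --- which re-index that sum onto the six classes of $\mathcal{B}$. Your route instead rebuilds the theta series from the character of conductor $(3)$; the setup is correct (the unit group does inject into $(\mathcal{O}_K/(3))^{\times}$, the level is $27$, and $\dim S_2(\Gamma_0(27))=1$ identifies $\Psi$ with $f_{27}$), and this would give a self-contained alternative proof if completed.

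The gap is that the entire content of the lemma --- that the support is exactly $\mathcal{B}$ and the weights are exactly $m+3n$ --- is disposed of with ``carrying out the case analysis \dots one finds that.'' Nothing is computed, and the bookkeeping you defer is precisely where the difficulty sits, for the reason you yourself flag: the classes $(1,-1)$ and $(-2,2)$ in $\mathcal{B}$ have $m\equiv n\pmod 2$, so $m+n\sqrt{-3}$ generates an ideal divisible by the inert prime $(2)$; such an ideal has all six of its associates in $\mathbb{Z}[\sqrt{-3}]$ rather than two, so the passage from $\sum_{\mathfrak{a}}\psi(\mathfrak{a})q^{N(\mathfrak{a})}$ to a lattice sum must select one representative per ideal out of six, with the right unit twist, and one must verify that the selected point's $m+3n$ plus its conjugate's reproduces $\psi(\mathfrak{a})+\psi(\overline{\mathfrak{a}})$. (A spot check at $N(\mathfrak{a})=28$ does work: the points $(4,2)$ and $(-5,-1)$ of $\mathcal{B}$ contribute $10-8=2=a_4a_7$.) Until that case analysis is actually written down --- or until you simply import \eqref{E:Rf27} and reduce to the elementary re-indexing the paper uses --- the lemma is asserted rather than proved; your fallback suggestion via a CM point and Proposition~\ref{T:RV} is likewise only a programme, since it presupposes an evaluation of $m_3$ that is not supplied.
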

\begin{proof}
As before, we will establish a $q$-expansion for $f_{27}(\tau)$ first; i.e., we aim at proving that 
\begin{equation*}
f_{27}(\tau)=\sum_{m,n\in\mathcal{B}}(m+3n)q^{m^2+3n^2}.
\end{equation*}
Recall from \cite[\S 6]{RogersHyper} that the following identity is true:
\begin{equation}\label{E:Rf27}
f_{27}(\tau)=\sum_{\substack{m,n\in\mathbb{Z}\\ (m,n)\equiv (1,1),(-2,-2)\\ \pmod 6 }}\left(\frac{m+3n}{4}\right)q^{ \frac{m^2+3n^2}{4}}.
\end{equation}
Therefore, it is sufficient to prove the following claims, each of which involves only simple manipulation. (Unless otherwise stated, each ordered pair $(a,b)$ listed beneath the sigma sign indicates all $(m,n)\in\mathbb{Z}^2$ such that $m\equiv a$ and $n\equiv b \pmod 6.$) 

\begin{claim}\label{Cl:1}
\begin{equation*}
\sum_{(1,1)}\left(\frac{m+3n}{4}\right)q^{ \frac{m^2+3n^2}{4}} = \sum_{(1,0),(-2,3)}(m+3n)q^{m^2+3n^2}+\sum_{(2,-1),(-1,2)}\left(\frac{m+3n}{2}\right)q^{m^2+3n^2}.
\end{equation*}
\end{claim}
\begin{claim}\label{Cl:2}
\begin{equation*}
\sum_{(-2,-2)}\left(\frac{m+3n}{4}\right)q^{ \frac{m^2+3n^2}{4}} = \sum_{(1,-1),(-2,2)}(m+3n)q^{m^2+3n^2}+\sum_{(2,-1),(-1,2)}\left(\frac{m+3n}{2}\right)q^{m^2+3n^2}.
\end{equation*}
\end{claim}
\begin{proof}[Proof of Claim~\ref{Cl:1}]
It is clear that 
\begin{align*}
\sum_{(1,0),(-2,3)}(m+3n)q^{m^2+3n^2}&=\sum_{(1,0),(-2,3)}mq^{m^2+3n^2}\\&=\sum_{(1,0),(-2,3)}\left(\frac{(m+3n)+3(m-n)}{4}\right)q^{\frac{(m+3n)^2+3(m-n)^2}{4}}, \text{ and }
\end{align*}
\begin{equation*}
\sum_{(2,-1),(-1,2)}\left(\frac{m+3n}{2}\right)q^{m^2+3n^2}=\sum_{(2,-1),(-1,2)}\left(\frac{(3n-m)+3(m+n)}{4}\right)q^{\frac{(3n-m)^2+3(m+n)^2}{4}}.
\end{equation*}
Also, it can be verified in a straightforward manner that 
\begin{multline*}
\{(m,n) \mid m\equiv n \equiv 1 \pmod 6\}=\{(k+3l,k-l)\mid (k,l)\equiv (1,0),(-2,3) \pmod 6\}\\ \sqcup \{(3l-k,k+l)\mid (k,l)\equiv (2,-1),(-1,2) \pmod 6\},
\end{multline*}
where $\sqcup$ denotes disjoint union, so we obtain Claim~\ref{Cl:1}.
\renewcommand{\qedsymbol}{}
\end{proof}

\begin{proof}[Proof of Claim~\ref{Cl:2}]
Let us make some observation first that, by symmetry, 
\begin{equation*}
\sum_{(1,-1),(-2,2)}(3m+3n)q^{m^2+3n^2}=0,
\end{equation*}
so we have that 
\begin{equation*}
\sum_{(1,-1),(-2,2)}(-2m)q^{m^2+3n^2}=\sum_{(1,-1),(-2,2)}(m+3n)q^{m^2+3n^2}.
\end{equation*}
It follows that
\begin{equation}\label{E:ad1}
\begin{aligned}
\sum_{(-1,-1),(2,2)}(m+3n)q^{m^2+3n^2}&=\sum_{(1,-1),(-2,2)}(-m+3n)q^{m^2+3n^2}\\
&=\sum_{(1,-1),(-2,2)}(m+3n)q^{m^2+3n^2}+\sum_{(1,-1),(-2,2)}(-2m)q^{m^2+3n^2}\\
&=2\sum_{(1,-1),(-2,2)}(m+3n)q^{m^2+3n^2}.
\end{aligned}
\end{equation}
Therefore, 
\begin{align*}
\sum_{(-2,-2)}\left(\frac{m+3n}{4}\right)q^{ \frac{m^2+3n^2}{4}}&=\sum_{(-1,-1) \pmod 3}\left(\frac{m+3n}{2}\right)q^{m^2+3n^2}\\
&=\sum_{\substack{(-1,-1),(2,2)\\ (2,-1),(-1,2)}}\left(\frac{m+3n}{2}\right)q^{m^2+3n^2}\\
&=\sum_{(1,-1),(-2,2)}(m+3n)q^{m^2+3n^2}+\sum_{(2,-1),(-1,2)}\left(\frac{m+3n}{2}\right)q^{m^2+3n^2},
\end{align*}
where the last equality comes from \eqref{E:ad1}.
\renewcommand{\qedsymbol}{}
\end{proof}
\end{proof}

\begin{lemma}\label{L:f108f27}
The following equality is true:
\begin{equation*}
L(f_{108},2)-\frac{3}{4}L(f_{27},2)=\frac{3}{2}\sum_{\substack{m,n\in\mathbb{Z}\\ 3\nmid n}}\frac{m\chi_{-3}(m)}{(3m^2+n^2)^2}.
\end{equation*}
\end{lemma}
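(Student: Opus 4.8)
The plan is to reduce the identity, via the theta-series formulas of Lemmas~\ref{L:f108} and~\ref{L:f27}, to an equality between two explicit lattice sums over $\mathbb{Z}^{2}$, and then to establish that equality using the symmetries of the form $m^{2}+3n^{2}$ (equivalently, the unit action of $\mathbb{Z}[\tfrac{1+\sqrt{-3}}{2}]$).

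First I would assemble the left-hand side. By Lemmas~\ref{L:f108} and~\ref{L:f27},
\[
L(f_{108},2)-\tfrac34 L(f_{27},2)=\sum_{(m,n)\in\mathcal{A}}\frac{m+3n}{(m^{2}+3n^{2})^{2}}-\frac34\sum_{(m,n)\in\mathcal{B}}\frac{m+3n}{(m^{2}+3n^{2})^{2}},
\]
and, since modulo $6$ one has $\mathcal{A}\cap\mathcal{B}=\{(1,0),(-2,3)\}$, $\mathcal{A}\setminus\mathcal{B}=\{(-1,-2),(2,1)\}$, and $\mathcal{B}\setminus\mathcal{A}=\{(1,-1),(-2,2),(2,-1),(-1,2)\}$, this is a single sum of $\tfrac{m+3n}{(m^{2}+3n^{2})^{2}}$ over these eight residue classes, carrying the weight $1$ on $\mathcal{A}\setminus\mathcal{B}$, $\tfrac14$ on $\mathcal{A}\cap\mathcal{B}$, and $-\tfrac34$ on $\mathcal{B}\setminus\mathcal{A}$.

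Second I would bring the right-hand side to a comparable shape. Since $3m^{2}+n^{2}=n^{2}+3m^{2}$, interchanging the summation variables turns $\tfrac32\sum_{3\nmid n}\tfrac{m\chi_{-3}(m)}{(3m^{2}+n^{2})^{2}}$ into $\tfrac32\sum_{3\nmid m}\tfrac{n\chi_{-3}(n)}{(m^{2}+3n^{2})^{2}}$ (the vanishing of $\chi_{-3}$ on multiples of $3$ forcing $3\nmid n$ as well), and unfolding $\chi_{-3}$ over residues modulo $6$ presents it as a signed rational combination of sums of $\tfrac{n}{(m^{2}+3n^{2})^{2}}$ over the residue classes of $(m,n)$ with both coordinates prime to $3$.

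The core of the argument is to match these two combinations. The tools are the reflections $(m,n)\mapsto(\pm m,\pm n)$, which leave $m^{2}+3n^{2}$ invariant (so they permute the residue-class sums, fold the four classes of $\mathcal{B}\setminus\mathcal{A}$ onto two, and, because $\chi_{-3}$ is odd, likewise simplify the $\chi_{-3}$-side), together with the ``rotations'' coming from multiplying by units of $\mathbb{Z}[\tfrac{1+\sqrt{-3}}{2}]$, the basic one being $(m,n)\mapsto\bigl(\tfrac{m+3n}{2},\tfrac{m-n}{2}\bigr)$: this preserves $m^{2}+3n^{2}$ and carries a numerator of the form $m+3n$ into one of the form $2m$, so iterating it and composing with reflections converts the $(m+3n)$-weighted sums on the left into single-variable-weighted sums which, regrouped by the sign pattern of $\chi_{-3}$, reproduce the right-hand side (the constant $\tfrac32$ falling out of the accumulated factors of $2$ and $3$). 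The main obstacle is entirely bookkeeping: these rotations map $\mathbb{Z}^{2}$ into itself only on the sublattice $m\equiv n\pmod 2$ and otherwise into the half-integer coset of $\mathbb{Z}[\tfrac{1+\sqrt{-3}}{2}]$, so the residue classes must be refined modulo $12$ and that coset handled separately, all while keeping exact track of the weights $1,\tfrac14,-\tfrac34$, the signs from $\chi_{-3}$, and the rotation factors until the two sides visibly agree.
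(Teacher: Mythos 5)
Your reduction of both sides to explicit lattice sums is correct: by Lemmas~\ref{L:f108} and~\ref{L:f27} the left side is the sum of $\tfrac{m+3n}{(m^2+3n^2)^2}$ over the eight residue classes of $\mathcal{A}\cup\mathcal{B}$ modulo $6$ with the weights $1$, $\tfrac14$, $-\tfrac34$ exactly as you list them, and interchanging the summation variables on the right side is legitimate. The problem is that the entire content of the lemma sits in the step you defer to ``bookkeeping.'' You never exhibit the actual chain of reflections and unit rotations, nor verify that after refining to classes modulo $12$ and handling the half-integer coset the accumulated factors really produce the coefficients $1$, $\tfrac14$, $-\tfrac34$ on one side and $\tfrac32$ on the other; the phrase ``until the two sides visibly agree'' is a promissory note, not a computation. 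Since the specific coefficient $-\tfrac34$ is precisely what has to be manufactured, and nothing in the proposal rules out the combination failing to close, this is a plan for a proof rather than a proof.

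For what it is worth, the paper closes this step with no rotations at all. After the variable swap it uses the reflection $n\mapsto -n$ (oddness of $\chi_{-3}$) to write $\tfrac32$ times the right-hand sum as $\sum_{3\nmid m,\ n\equiv -1\ (3)}\tfrac{-3n}{(m^2+3n^2)^2}$, then uses the vanishing of $\sum \tfrac{m}{(m^2+3n^2)^2}$ over that $m\mapsto -m$ symmetric set to replace the numerator $-3n$ by $-(m+3n)$. Moving this to the left and adding the $\mathcal{A}$-sum gives $\tfrac{m+3n}{(m^2+3n^2)^2}$ summed over twelve classes modulo $6$; because the numerator is odd under $(m,n)\mapsto(-m,-n)$, classes cancel in negation pairs and the total collapses to $\sum_{\mathcal{B}}-\sum_{(1,1),(-2,-2)}$. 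The first piece is $L(f_{27},2)$ by Lemma~\ref{L:f27}, while the second equals $\tfrac14 L(f_{27},2)$ by \eqref{E:Lf27}, i.e.\ by the Mellin transform of the alternative expansion \eqref{E:Rf27} of $f_{27}$ --- an ingredient your plan never invokes and for which you would need a substitute. If you want to complete your route, the two facts you are missing are exactly these: the negation antisymmetry of $m+3n$ (to collapse the residue classes) and the second representation $\tfrac14 L(f_{27},2)=\sum_{(1,1),(-2,-2)}\tfrac{m+3n}{(m^2+3n^2)^2}$.
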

\begin{proof}
Taking the Mellin transform of $f_{27}(\tau)$ in \eqref{E:Rf27} yields 
\begin{equation}\label{E:Lf27}
L(f_{27},2)=4\sum_{(1,1),(-2,-2)}\frac{m+3n}{(3m^2+n^2)^2}.
\end{equation}
Since $\chi_{-3}(n)=j$ iff $n\equiv j \pmod 3$, where $j\in\{-1,0,1\}$,
we have that 
\begin{align*}
\sum_{\substack{m,n\in\mathbb{Z}\\ 3\nmid n}}\frac{m\chi_{-3}(m)}{(3m^2+n^2)^2}&=\sum_{\substack{m,n\in\mathbb{Z}\\ 3\nmid m}}\frac{n\chi_{-3}(n)}{(m^2+3n^2)^2}\\
&=\sum_{\substack{n\equiv -1 \pmod 3\\ 3\nmid m}}\frac{-2n}{(m^2+3n^2)^2}.
\end{align*}
Also, it is obvious that the symmetry of the summation yields 
\begin{equation*}
\sum_{\substack{n\equiv -1 \pmod 3\\ 3\nmid m}}\frac{m}{(m^2+3n^2)^2}=0.
\end{equation*}
Hence, using Lemma~\ref{L:f108}, one sees that 
\begin{align*}
L(f_{108},2)-\frac{3}{2}\sum_{\substack{m,n\in\mathbb{Z}\\ 3\nmid n}}\frac{m\chi_{-3}(m)}{(3m^2+n^2)^2} &= \sum_{\substack{(-1,-2),(2,1)\\ (1,0),(-2,3)}}\frac{m+3n}{(m^2+3n^2)^2}+\sum_{\substack{n\equiv -1 \pmod 3\\ 3\nmid m}}\frac{3n}{(m^2+3n^2)^2}\\
&=\sum_{\substack{(-1,-2),(2,1)\\ (1,0),(-2,3)}}\frac{m+3n}{(m^2+3n^2)^2}+\sum_{\substack{n\equiv -1 \pmod 3\\ 3\nmid m}}\frac{m+3n}{(m^2+3n^2)^2}\\
&=\sum_{\substack{(-1,-2),(2,1)\\ (1,0),(-2,3)}}\frac{m+3n}{(m^2+3n^2)^2}+\sum_{\substack{(-2,2),(-2,-1)\\(-1,2),(-1,-1)\\(1,2),(1,-1)\\(2,2),(2,-1)}}\frac{m+3n}{(m^2+3n^2)^2}\\
&= \sum_{\substack{(1,0),(-2,3)\\(1,-1),(-2,2)\\(2,-1),(-1,2)}}\frac{m+3n}{(m^2+3n^2)^2}-\sum_{(1,1),(-2,-2)}\frac{m+3n}{(m^2+3n^2)^2}\\
&= L(f_{27},2)-\frac{1}{4}L(f_{27},2),
\end{align*}
where we have applied Lemma~\ref{L:f27} and \eqref{E:Lf27} in the last equality.
\end{proof}
Putting the previous lemmas together, we are now ready to complete a proof of Theorem~\ref{T:Main3}.

\begin{proof}[Proof of Theorem~\ref{T:Main3}]
Let $\tau_0=\sqrt{-3}/9$. Then $t_3(\tau_0)=6-6\sqrt[3]{2}+18\sqrt[3]{4}.$ This can be verified by considering numerical approximation of $t_3(\tau_0)$ and using the following identities:
\begin{align*}
j(\tau)&=j(-1/\tau),\qquad \mathfrak{f}^3(\sqrt{-27})=2(1+\sqrt[3]{2}+\sqrt[3]{4}),\\
j(\tau)&=\frac{(\mathfrak{f}^{24}(\tau)-16)^3}{\mathfrak{f}^{24}(\tau)}=\frac{t_3(\tau)(t_3(\tau)+216)^3}{(t_3(\tau)-27)^3},
\end{align*}
where $j(\tau)$ is the $j$-invariant, and $\mathfrak{f}(\tau)$ is a Weber modular function defined by $$\mathfrak{f}(\tau)=e^{-\frac{\pi i}{24}}\frac{\eta\left(\frac{\tau+1}{2}\right)}{\eta(\tau)}.$$ (For references to these identities, see \cite[\S1]{ChenYui}, \cite[Tab.~VI]{Weber},  and \cite[\S 1]{YuiZagier}.)
Then we see from Proposition~\ref{T:RV} that  
\begin{align*}
m_3(t_3(\tau_0))&=\frac{27}{4\pi^2}\sideset{}{'}\sum_{m,n\in\mathbb{Z}}\frac{m\chi_{-3}(m)}{(m^2+\frac{n^2}{3})^2}\\
&=\frac{3}{2}\left(\frac{81}{2\pi^2}\sideset{}{'}\sum_{m,n\in\mathbb{Z}}\frac{m\chi_{-3}(m)}{(3m^2+n^2)^2}\right)\\
&=\frac{3}{2}\left(\frac{81}{2\pi^2}\sideset{}{'}\sum_{\substack{m,n\in\mathbb{Z}\\3|n}}\frac{m\chi_{-3}(m)}{(3m^2+n^2)^2}+\frac{81}{2\pi^2}\sum_{\substack{m,n\in\mathbb{Z}\\3\nmid n}}\frac{m\chi_{-3}(m)}{(3m^2+n^2)^2}\right)\\
&=\frac{3}{2}\left(\frac{9}{2\pi^2}\sideset{}{'}\sum_{m,n\in\mathbb{Z}}\frac{m\chi_{-3}(m)}{(m^2+3n^2)^2}+\frac{81}{2\pi^2}\sum_{\substack{m,n\in\mathbb{Z}\\3\nmid n}}\frac{m\chi_{-3}(m)}{(3m^2+n^2)^2}\right).
\end{align*}
Now we can deduce using Lemma~\ref{L:f36} and Lemma~\ref{L:f108f27} that 
\begin{equation}\label{E:Lf2}
m_3(t_3(\tau_0))=\frac{3}{2}\left(\frac{27}{\pi^2}L(f_{108},2)+\frac{9}{\pi^2}L(f_{36},2)-\frac{81}{4\pi^2}L(f_{27},2)\right).
\end{equation}
Finally, the formula stated in the theorem is merely a simple consequence of \eqref{E:Lf2} and the functional equation
\begin{equation*}\label{E:FEnewform}
\left(\frac{\sqrt{N}}{2\pi}\right)^s\Gamma(s)L(f,s)=\epsilon\left(\frac{\sqrt{N}}{2\pi}\right)^{2-s}\Gamma(2-s)L(f,2-s),
\end{equation*}
where $f$ is any newform of weight $2$ and level $N$ with real Fourier coefficients, and $\epsilon\in\{-1,1\}$, depending on $f$. (If $f\in\{f_{27},f_{36},f_{108}\}$, then $\epsilon=1$.)
\end{proof}
In addition to the formula stated in Theorem~\ref{T:Main3}, we discovered some other conjectured formulas of similar type using numerical values of the hypergeometric representation of $m_3(t)$ given in Proposition~\ref{T:Hyper2}:
\begin{align*}
m_3\left(17766+14094\sqrt[3]{2}+11178\sqrt[3]{4}\right)&\stackrel{?}=\frac{3}{2}(L'(f_{108},0)+3L'(f_{36},0)+3L'(f_{27},0)),\\
m_3(\alpha\pm\beta i)&\stackrel{?}=\frac{3}{2}(L'(f_{108},0)+3L'(f_{36},0)-6L'(f_{27},0)),\\ 
m_3\left(\frac{(7+\sqrt{5})^3}{4}\right)&\stackrel{?}=\frac{1}{8}\left(9L'(f_{100},0)+38L'(f_{20},0)\right),\\
m_3\left(\frac{(7-\sqrt{5})^3}{4}\right)&\stackrel{?}=\frac{1}{4}\left(9L'(f_{100},0)-38L'(f_{20},0)\right),
\end{align*} 
where $\alpha=17766-7047\sqrt[3]{2}-5589\sqrt[3]{4}$,  $\beta=27\sqrt{3}(261\sqrt[3]{2}-207\sqrt[3]{4})$, $f_{100}(\tau)=q+2q^3-2q^7+q^9-2q^{13}+6q^{17}-4q^{19}-\cdots$, and  $f_{20}(\tau)=\eta^2(2\tau)\eta^2(10\tau)$.

It is worth mentioning that the last two Mahler measures above also appear in \cite[Thm.~6]{Guillera} and \cite[\S 4]{RZ0}. More precisely, it was shown that 
\begin{align}
19m_3(32)\label{E:WZ}&=16m_3\left(\frac{(7+\sqrt{5})^3}{4}\right)-8m_3\left(\frac{(7-\sqrt{5})^3}{4}\right),\\
m_3(32)&=8L'(f_{20},0).
\end{align}
Many of the identities like \eqref{E:WZ} can be proved using the elliptic dilogarithm evaluated at some torsion points on the corresponding elliptic curve. However, to our knowledge, no rigorous proof of the conjectured formulas for the individual terms on the right seems to appear in the literature.

\section{Three-variable Mahler measures} \label{Sec:three}
From here on, we denote 
\begin{align*}
A_s &:=(x+x^{-1})(y+y^{-1})(z+z^{-1})+s^{1/2}, &n_2(s)&:=2m(A_s),\\
B_s &:=(x+x^{-1})^2(y+y^{-1})^2(1+z)^3z^{-2}-s, &n_3(s)&:=m(B_s),\\
C_s &:=x^4+y^4+z^4+1+s^{1/4}xyz, &n_4(s)&:=4m(C_s),\\
s_2(q(\tau))&:=-\frac{\Delta\left(\tau+\frac{1}{2}\right)}{\Delta(2\tau+1)},\\
s_3(q(\tau))&:=\left(27\left(\frac{\eta(3\tau)}{\eta(\tau)}\right)^6+\left(\frac{\eta(\tau)}{\eta(3\tau)}\right)^6\right)^2,\\
s_4(q(\tau))&:=\frac{\Delta(2\tau)}{\Delta(\tau)}\left(16\left(\frac{\eta(\tau)\eta(4\tau)^2}{\eta(2\tau)^3}\right)^4+\left(\frac{\eta(2\tau)^3}{\eta(\tau)\eta(4\tau)^2}\right)^4\right)^4,
\end{align*}
where $\Delta(\tau)=\eta^{24}(\tau)$ and $q(\tau)=e^{2\pi i \tau}.$ By abuse of notation, we will sometimes write $s_j(\tau)$ instead of $s_j(q(\tau))$, while they actually represent the same function.

The main result we will show in this section is stated as follows:
\begin{theorem}\label{P:threevar}
The following identities are true:
\begin{align*}
n_4(26856+15300\sqrt{3})&=\frac{5}{12}\left(20L'(g_{12},0)+4L'(g_{48},0)+11L'(\chi_{-3},-1)+8L'(\chi_{-4},-1)\right),\\
n_4(26856-15300\sqrt{3})&=\frac{5}{6}\left(-20L'(g_{12},0)+4L'(g_{48},0)-11L'(\chi_{-3},-1)+8L'(\chi_{-4},-1)\right),
\end{align*}
where $g_{12}(\tau)=\eta^3(2\tau)\eta^3(6\tau)\in S_3(\Gamma_0(12),\chi_{-3})$, and $g_{48}(\tau)$ is the quadratic twist of $g_{12}$ by $\chi_{-4}$ and belongs to $S_3(\Gamma_0(48),\chi_{-3}).$
\end{theorem}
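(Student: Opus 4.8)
The plan is to follow the same strategy that proves Theorem~\ref{T:Main3}, replacing the modular parametrization of $m_3(t)$ by the corresponding parametrization of $n_4(s)$. The first step is to recall (from \cite{Samart}) the Eisenstein--Kronecker expression for $n_4(s_4(\tau))$ on the appropriate fundamental domain: there is a weight-$3$ Eisenstein--Kronecker series $K(\tau)$ attached to $\QQ(\sqrt{-3})$ and/or $\QQ(\sqrt{-1})$ such that $n_4(s_4(\tau))$ equals an explicit constant times $\Im(\tau)$ times $\sideset{}{'}\sum_{m,n}\frac{\chi(m)(\cdots)}{[(m+N n\tau)(m+Nn\bar\tau)]^3}$ for a suitable level $N$ and character $\chi$. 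Then I would locate a CM point $\tau_0$ in that fundamental domain with $s_4(\tau_0)=26856+15300\sqrt3$ (and a second point for the minus sign). As in the proof of Theorem~\ref{T:Main3}, pinning down $\tau_0$ is done by computing $s_4(\tau_0)$ numerically and matching it against the algebraic value forced by the relevant modular equation — here the singular modulus / Weber-function identities for discriminant $-48$ or $-12$ (since $26856+15300\sqrt3$ should correspond to a ring-class field point over $\QQ(\sqrt{-3})$), using $j(\tau)=j(-1/\tau)$ and the explicit relation expressing $j$ (and hence $s_4$) in terms of $\mathfrak{f}(\tau)$. One expects $\tau_0 = \sqrt{-3}/12$ or $\sqrt{-12}$ up to the action of $\Gamma_0(4)$, by analogy with $\tau_0=\sqrt{-3}/9$ in the two-variable case.

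Having fixed $\tau_0$, the Eisenstein--Kronecker sum at $\tau=\tau_0$ becomes a lattice sum over $\ZZ[\sqrt{-3}]$ (or a sublattice), which I would decompose according to divisibility conditions mod $3$ (and mod $2$) exactly as in Lemmas~\ref{L:f108}, \ref{L:f27}, \ref{L:f108f27}. The analogue of those lemmas is a set of four auxiliary identities expressing each of $L(g_{12},3)$, $L(g_{48},3)$, $L(\chi_{-3},2)$, $L(\chi_{-4},2)$ as a restricted lattice sum; for the two weight-$3$ newforms these come from realizing $g_{12}$ and $g_{48}$ as theta series attached to Hecke Gr\"ossencharacters of weight $3$ and conductor a suitable ideal of $\ZZ\left[\tfrac{1+\sqrt{-3}}{2}\right]$ (this is the weight-$3$ version of the construction in Lemma~\ref{L:f108}, and $g_{12}=\eta^3(2\tau)\eta^3(6\tau)$ being CM makes this concrete), taking Mellin transforms, and then doing the mod-$6$ bookkeeping to rewrite the resulting sums with the simple summand $m+3n$ over explicit congruence classes. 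The Dirichlet $L$-values enter because the ``diagonal'' pieces of the lattice sum (where one of $m,n$ vanishes modulo the relevant prime) collapse to one-dimensional sums $\sum \chi_{-3}(n)/n^4$ and $\sum \chi_{-4}(n)/n^4$, i.e.\ to $L(\chi_{-3},4)$ and $L(\chi_{-4},4)$ — or, after the functional equations below, $L(\chi_{-3},-1)$ and $L(\chi_{-4},-1)$; I would need to track the exact rational constants here carefully, since the stated coefficients $\tfrac{5}{12}(20,4,11,8)$ are not as symmetric as in the two-variable theorem.

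Once the lattice sum is written as a rational linear combination of $L(g_{12},3)$, $L(g_{48},3)$, $L(\chi_{-3},4)$, $L(\chi_{-4},4)$, the final step converts critical $L$-values to $L'$-values at the near-central point via the functional equations: for a weight-$3$ newform $g$ of level $N$ and nebentypus $\chi$ one has $(\sqrt N/2\pi)^s\Gamma(s)L(g,s)=\varepsilon\,(\sqrt N/2\pi)^{3-s}\Gamma(3-s)L(\bar g,3-s)$, relating $L(g,3)$ to $L'(g,0)$ (the completed $L$-function vanishes at $s=0$, forcing a derivative) with an explicit power of $\pi$ and a sign $\varepsilon$ to be determined for each of $g_{12},g_{48}$; similarly $L(\chi_{-D},4)$ is tied to $L'(\chi_{-D},-1)$ through the functional equation for odd Dirichlet characters. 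The second identity (the minus case) is then handled by running the same computation at the second CM point, or — more cheaply — by invoking a functional equation for $n_4$ analogous to \eqref{E:KO}/\eqref{E:LR} relating $n_4(26856\pm15300\sqrt3)$ to $n_4$ of a rational argument whose value is already known, which pins down the combination once the plus case is established.

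The main obstacle I anticipate is the identification and arithmetic of the two weight-$3$ CM newforms: showing $g_{12}$ and especially the more complicated $\eta$-quotient $g_{48}$ arise from specific weight-$3$ Gr\"ossencharacters, and then carrying out the mod-$6$ (and mod-$2$, mod-$4$) partition of the weight-$3$ lattice sum so that all four target $L$-values, with exactly the coefficients $20,4,11,8$ and the overall $\tfrac{5}{12}$, drop out. Keeping the powers of $\pi$ and the root numbers straight through the weight-$3$ functional equations — and confirming $\varepsilon=+1$ (or whatever values occur) for $g_{12}$ and $g_{48}$ — is the other delicate bookkeeping point; the determination of $\tau_0$ itself, while requiring a nontrivial singular-modulus computation, is routine given the identities already cited in the proof of Theorem~\ref{T:Main3}.
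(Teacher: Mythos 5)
Your overall strategy is the paper's: express $n_4(s_4(\tau))$ as an Eisenstein--Kronecker series, evaluate at CM points (the paper uses $\tau=\sqrt{-3}$ and $\tau=\sqrt{-3}/2$, pinned down via Weber's table of values of $\mathfrak{f}_1$ rather than via $j$-invariant matching), convert the resulting lattice sums to $L$-values, and finish with functional equations. But two steps, as you describe them, would not go through. First, the Dirichlet $L$-values do not arise from ``diagonal'' pieces of the weight-$3$ sum collapsing to $L(\chi,4)$: the Eisenstein--Kronecker formula already contains, as separate summands, the full two-dimensional weight-$2$ Epstein zeta functions $\sideset{}{'}\sum_{m,n}(m^2+3n^2)^{-2}$ and $\sideset{}{'}\sum_{m,n}(m^2+12n^2)^{-2}$, and these are evaluated by the Glasser--Zucker lattice-sum factorizations; e.g.\ the second equals $\tfrac{69}{64}\zeta(2)L(\chi_{-3},2)+L(\chi_{12},2)L(\chi_{-4},2)$, which is the only place $\chi_{-4}$ enters, and it enters at $s=2$. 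Your functional-equation step is correspondingly off: $L(\chi_{-k},4)$ is tied to $L'(\chi_{-k},-3)$, not to $L'(\chi_{-k},-1)$; it is $L(\chi_{-k},2)$ that converts to $L'(\chi_{-k},-1)$. As written, your mechanism could not produce the coefficients $11$ and $8$.

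Second, for the weight-$3$ part, the lattice sum at $\tau=\sqrt{-3}$ involves the three quadratic forms $m^2+3n^2$, $m^2+12n^2$ and (implicitly) $3m^2+4n^2$, and the combination that appears is not directly $5L(g_{12},3)+8L(g_{48},3)$: one first needs Bertin's identity $\tfrac{9}{8}\sideset{}{'}\sum\tfrac{m^2-3n^2}{(m^2+3n^2)^3}=\sideset{}{'}\sum\bigl(\tfrac{m^2-12n^2}{(m^2+12n^2)^3}+\tfrac{4n^2-3m^2}{(3m^2+4n^2)^3}\bigr)$ to rearrange the sum into the combinations that the lemmas of \cite{Samart} identify with $L(g_{12},3)$ and $L(g_{48},3)$. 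Your proposed rederivation of $g_{12}$ and $g_{48}$ as Gr\"ossencharacter theta series could in principle substitute for those cited lemmas, but it does not by itself supply this three-form relation, which is the actual source of the coefficients $20$ and $4$. The identification of the CM points and the weight-$3$ functional equations are, as you say, routine; note only that the Eisenstein--Kronecker representation is stated for purely imaginary $\tau$ with $\Im(\tau)\geq 1/\sqrt{2}$, which rules out a point like $\sqrt{-3}/12$ from the outset.
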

\begin{proof}
By a result in \cite[Prop.~2.1]{Samart}, we have that $n_4(s)$ can be expressed as Eisenstein-Kronecker series when $s$ is parameterized by $s_4(\tau)$, namely
\begin{multline}\label{E:EKn4}
n_4(s_4(\tau))=\frac{10\Im(\tau)}{\pi^3}\sideset{}{'}\sum_{m,n\in \mathbb{Z}}\biggl(-\left(\frac{4n^2}{(m^2|\tau|^2+n^2)^3}-\frac{1}{(m^2|\tau|^2+n^2)^2}\right)\\
+4\left(\frac{4n^2}{(4m^2|\tau|^2+n^2)^3}-\frac{1}{(4m^2|\tau|^2+n^2)^2}\right)\biggr)
\end{multline}
for every $\tau\in\mathbb{C}$ such that $\tau$ is purely imaginary and $\Im(\tau)\geq 1/ \sqrt{2}.$ It is clear that $s_4(\tau)$ can be rewritten in the form 
\begin{equation*}
s_4(\tau)=\frac{1}{\mathfrak{f}_1^8(2\tau)}\left(\frac{16}{\mathfrak{f}_1^8(4\tau)}+\frac{\mathfrak{f}_1^8(4\tau)}{\mathfrak{f}_1^{8}(2\tau)}\right)^4,
\end{equation*}
where $\displaystyle\mathfrak{f}_1(\tau):=\frac{\eta\left(\frac{\tau}{2}\right)}{\eta(\tau)}$, also known as a Weber modular function. We obtain from \cite[Tab.~VI]{Weber} that 
\begin{equation*}
\mathfrak{f}_1^4(\sqrt{-12})=2^{\frac{7}{6}}(1+\sqrt{3}), \qquad \mathfrak{f}_1^8(\sqrt{-48})=2^{\frac{19}{6}}(1+\sqrt{3})(\sqrt{2}+\sqrt{3})^2(1+\sqrt{2})^2.
\end{equation*}
Therefore, after simplifying, we have $s_4\left(\sqrt{-3}\right)=26856+15300\sqrt{3},$ and substituiting $\tau=\sqrt{-3}$ in \eqref{E:EKn4} yields
\begin{equation}\label{E:ad2}
\begin{aligned}
n_4(26856+15300\sqrt{3})&=\frac{10\sqrt{3}}{\pi^3}\sideset{}{'}\sum_{m,n\in\mathbb{Z}}\biggl(-\left(\frac{4n^2}{(3m^2+n^2)^3}-\frac{1}{(3m^2+n^2)^2}\right)\\&\qquad\qquad+4\left(\frac{4n^2}{(12m^2+n^2)^3}-\frac{1}{(12m^2+n^2)^2}\right)\biggr)\\
&=\frac{10\sqrt{3}}{\pi^3}\sideset{}{'}\sum_{m,n\in\mathbb{Z}}\biggl(\frac{2(3n^2-m^2)}{(m^2+3n^2)^3}+\frac{8(m^2-12n^2)}{(m^2+12n^2)^3}\\&\qquad\qquad+\frac{4}{(m^2+12n^2)^2}-\frac{1}{(m^2+3n^2)^2}\biggr).
\end{aligned}
\end{equation}

It was proved in \cite[Cor.~4.4]{BertinMain} that the following identity holds:
\begin{equation}\label{E:ad3}
\frac{9}{8}\sideset{}{'}\sum_{m,n\in\mathbb{Z}}\frac{m^2-3n^2}{(m^2+3n^2)^3} = \sideset{}{'}\sum_{m,n\in\mathbb{Z}}\left(\frac{m^2-12n^2}{(m^2+12n^2)^3}+\frac{4n^2-3m^2}{(3m^2+4n^2)^3}\right).
\end{equation}
Equivalently, one has that
\begin{equation}\label{E:modular}
\begin{aligned}
\sideset{}{'}\sum_{m,n\in\mathbb{Z}}\left(\frac{2(3n^2-m^2)}{(m^2+3n^2)^3}+\frac{8(m^2-12n^2)}{(m^2+12n^2)^3}\right)&= \frac{5}{2}\sideset{}{'}\sum_{m,n\in\mathbb{Z}}\frac{m^2-3n^2}{(m^2+3n^2)^3}\\&\qquad\qquad+4\sideset{}{'}\sum_{m,n\in\mathbb{Z}}\left(\frac{m^2-12n^2}{(m^2+12n^2)^3}+\frac{3m^2-4n^2}{(3m^2+4n^2)^3}\right)\\
&=5L(g_{12},3)+8L(g_{48},3),
\end{aligned}
\end{equation}
where the last equality is a direct consequence of Lemma~2.7 and Lemma~2.12 in \cite{Samart}. 

Recall from Glasser and Zucker's results on lattice sums \cite[Tab.~VI]{Lattice} that
\begin{equation}\label{E:Dirichlet}
\begin{aligned}
\sideset{}{'}\sum_{m,n\in\mathbb{Z}}\frac{1}{(m^2+3n^2)^2}&=\frac{9}{4}\zeta(2)L(\chi_{-3},2)=\frac{3\pi^2}{8}L(\chi_{-3},2),\\
\sideset{}{'}\sum_{m,n\in\mathbb{Z}}\frac{1}{(m^2+12n^2)^2}&=\frac{69}{64}\zeta(2)L(\chi_{-3},2)+L(\chi_{12},2)L(\chi_{-4},2)\\ 
&=\frac{23\pi^2}{128}L(\chi_{-3},2)+\frac{\pi^2}{6\sqrt{3}}L(\chi_{-4},2).
\end{aligned}
\end{equation}
Then we substitute \eqref{E:modular} and \eqref{E:Dirichlet} in \eqref{E:ad2} to get
\begin{equation*}
n_4(26856+15300\sqrt{3})=\frac{50\sqrt{3}}{\pi^3}L(g_{12},3)+\frac{80\sqrt{3}}{\pi^3}L(g_{48},3)+\frac{55\sqrt{3}}{16\pi}L(\chi_{-3},2)+\frac{20}{3\pi}L(\chi_{-4},2).
\end{equation*}
Finally, the derivative expression follows directly from the functional equations for the involved $L$-functions.

The second formula can be shown in a similar manner by choosing $\tau_0=\sqrt{-3}/2$. Although Weber did not list an explicit value of $\mathfrak{f}_1(\sqrt{-3})$ in his book, one can find it easily using the identity $\mathfrak{f}_1(2\tau)=\mathfrak{f}(\tau)\mathfrak{f}_1(\tau)$ and the fact that $\mathfrak{f}(\sqrt{-3})=2^{\frac{1}{3}}.$ Therefore, we have $s_4(\tau_0)=26856-15300\sqrt{3},$ and 
\begin{align*}
n_4(s_4(\tau_0))&=\frac{20\sqrt{3}}{\pi^3}\sideset{}{'}\sum_{m,n\in\mathbb{Z}}\biggl(\frac{8(3m^2-4n^2)}{(3m^2+4n^2)^3}+\frac{2(m^2-3n^2)}{(m^2+3n^2)^3}+\frac{1}{(m^2+3n^2)^2}-\frac{4}{(3m^2+4n^2)^2}\biggr)\\
&=\frac{20\sqrt{3}}{\pi^3}(-5L(g_{12},3)+8L(g_{48},3)-\frac{11\pi^2}{32}L(\chi_{-3},2)+\frac{2\pi^2}{3\sqrt{3}}L(\chi_{-4},2)),
\end{align*}
where we again use \eqref{E:ad3}, \eqref{E:Dirichlet}, and the identity 
\begin{equation*}
2L(\chi_{12},2)L(\chi_{-4},2)=\sideset{}{'}\sum_{m,n\in\mathbb{Z}}\left(\frac{1}{(m^2+12n^2)^2}-\frac{1}{(3m^2+4n^2)^2}\right)
\end{equation*}
(see \cite[Lem.~2.6]{Samart}).
\end{proof}

\section{Arithmetic of $K3$ surfaces} \label{Sec:four}
For additional details omitted from this section, the reader may consult \cite[\S 2]{BertinMain} and \cite{SchuttII}. Recall that a smooth projective surface $X$ is called a $K3$ surface if $H^1(X,\mathcal{O}_X)=0$ and the canonical bundle of $X$ is trivial. Hence every $K3$ surface admits a holomorphic $2$-form, unique up to scalar multiplication. Also, one has that $H_2(X,\mathbb{Z})$ is a free abelian group of rank $22$ and can be decomposed into $H_2(X,\mathbb{Z})\cong \NS(X)\oplus \T(X),$ where $\NS(X)$, called the \textit{N\'{e}ron-Severi group}, is the group of algebraic equivalence classes of divisors on $X$, and $T(X)$, the \textit{transcendental lattice}, is the orthogonal complement of $\NS(X)$ in $H_2(X,\mathbb{Z}).$ The rank of $\NS(X)$, denoted by $\rho(X)$, is called the \textit{Picard number} of $X$. Over any field of characteristic zero, we have $1\leq \rho(X)\leq 20,$ and $X$ is said to be \textit{singular} if $\rho(X)=20.$  Let $\{\gamma_1,\gamma_2,\ldots,\gamma_{22}\}$ be a basis for $H_2(X,\mathbb{Z})$, and let $\omega$ be a nowhere-vanishing holomorphic $2$-form on $X$. Then the integral $$\int_{\gamma_i}\omega$$
is called a period of $X$, which vanishes if and only if $\gamma_i\in \NS(X).$ We shall denote by $X_s,Y_s,$ and $Z_s$ the projective hypersurfaces corresponding to the one-parameter families $A_s, B_s,$ and $C_s$, respectively. The family $Z_s$ is sometimes called the \textit{Dwork family} and is known to be $K3$ surfaces (see; e.g., \cite{Hartmann}). To see that, for all but finitely many $s$, $X_s$ is a $K3$ surface, it suffices to show that it is birational to an elliptic surface which has a minimal Weierstrass form 
\begin{equation*}
y^2=x^3+A_4(z)x+A_6(z),
\end{equation*}
where $A_4(z),A_6(z)\in \mathbb{Z}[s,z]$ with $\deg (A_i)\leq 2i$ for all $i$ and $\deg (A_i)>i$ for some $i$ \cite[\S 4]{SS}. Indeed, one can manipulate this using \texttt{Maple} and find that 
\begin{align*}
A_4(z)&=-768\left(z^2+1\right)^4+48sz^2\left(z^2+1\right)^2-3s^2z^4,\\
A_6(z)&=8192\left(z^2+1\right)^6-768sz^2\left(z^2+1\right)^4-48s^2z^4\left(z^2+1\right)^2+2s^3z^6.
\end{align*}
Since $A_4(z)$ and $A_6(z)$ satisfy the conditions above, it follows that $X_s$ is generically a family of $K3$ surfaces. Also, using the Weierstrass model above, we have that $X_s$ is defined over $\mathbb{Q}$ if $s\in\mathbb{Q}$.
Letting $s=1/\mu$, we have that a period of $X_{s(\mu)}$ is 
\begin{align*}
u_0(\mu):=&\frac{1}{(2\pi i)^3}\int_{\mathbb{T}^3}\frac{1}{1-\mu^{1/2}\left(x+x^{-1}\right)\left(y+y^{-1}\right)\left(z+z^{-1}\right)}\frac{dx}{x}\frac{dy}{y}\frac{dz}{z}\\
=&\pFq{3}{2}{\frac{1}{2},\frac{1}{2},\frac{1}{2}}{1,1}{64\mu}.
\end{align*}
One can observe from the definition of the Mahler measure that in this case, for $s>64$,
\begin{equation*}
\frac{d n_2(s)}{ds}=2\mu^{\frac{1}{2}}u_0(\mu).
\end{equation*}
Furthermore, it can be checked easily that $u_0$ is a holomorphic solution around $\mu=0$ of the third-order differential equation 
\begin{equation*}
\mu^2(64\mu-1)\frac{d^3u}{d\mu^3}+\mu(288\mu-3)\frac{d^2u}{d\mu^2}+(208\mu-1)\frac{du}{d\mu}+8u=0,
\end{equation*}
called the \textit{Picard-Fuchs equation} of $X_{s(\mu)}$.
Since the order of the Picard-Fuchs equation equals the rank of $T(X)$, the generic Picard number of $X_s$ must be $19$, and we have from Morrison's result \cite[Cor.~6.4]{Morrison} that $X_s$ admits a Shida-Inose structure for every nonzero $s$. Roughly speaking, this means that there are isogenous elliptic curves $E_s$ and $E_s'$ together with the following diagram:
\begin{center}
\begin{tikzpicture}
\draw [dashed,->](-2,1) -- (-0.5,0) ;
\draw [dashed,->] (2,1) -- (0.5,0) ;
\end{tikzpicture}
\put(-125,32.5){$X_s$}
\put(-11,32.5){$E_s\times E_s'$}
\put(-91,-15){$\Km(E_s\times E_s')$}
\end{center}
Here $\Km(E_s\times E_s')$ is the Kummer surface for $E_s$ and $E_s'$, and the dashed arrows denote rational maps of degree $2$. In addition, $E_s$ is a CM elliptic curve if and only if $X_s$ is singular.
It is known from the results due to Ahlgren, Ono, and Penniston \cite{AOP} and Long \cite{LongII,LongIII} that $u_0\left(-\frac{\mu}{64}\right)$ is a holomorphic solution around $\mu=0$ of the Picard-Fuchs equation of the family of $K3$ surfaces given by the equation
\begin{equation*}
\tilde{X}_\mu : z^2=xy(x+1)(y+1)(x+\mu y).
\end{equation*}
In particular, they proved that the family of elliptic curves associated to $\tilde{X}_\mu$ via a Shioda-Inose structure is 
\begin{equation*}
\tilde{E}_\mu : y^2=(x-1)\left(x^2-\frac{1}{1+\mu}\right).
\end{equation*}
Hence, by simple reparametrization, the family of elliptic curves 
\begin{equation*}
E_s : y^2=(x-1)\left(x^2-\frac{s}{s-64}\right),
\end{equation*}
gives rise to the Shioda-Inose structure of $X_s$, and the $j$-function of $E_s$ is 
\begin{equation*}
j(E_s)=\frac{(s-16)^3}{s}.
\end{equation*}
Recall from \cite[\S A.3]{Silverman} that if $E_s$ is defined over $\mathbb{Q}$, then $E_s$ has complex multiplication if and only if \begin{align*}
j(E_s)\in &\{-640320^3, -5280^3, -960^3, -3\cdot 160^3, -96^3, -32^3, -15^3, \\&\quad 0, 12^3, 20^3, 2\cdot 30^3, 66^3, 255^3\}=:\mathcal{C}_1.
\end{align*}
Furthermore, with the aid of \texttt{Sage}, we find that the set of the CM $j$-invariants in $\mathbb{Q}(\sqrt{2})$ is 
\begin{align*}
\mathcal{C}_1\cup &\{41113158120\pm 29071392966\sqrt{2}, 26125000\pm 18473000\sqrt{2}, 2417472\pm 1707264\sqrt{2},\\ & 3147421320000 \pm 2225561184000\sqrt{2}\}=:\mathcal{C}_2.
\end{align*}
As a consequence, we can explicitly determine the values of $s$ such that $E_s$ has a CM $j$-invariant in $\mathcal{C}_2$. Some of these values are given below, together with $j(E_s)$, the discriminant $D$, and the conductor $f$ of the order of the complex multiplication.  
\begin{table}[ht]
\centering
    \begin{tabular}{ | c | c | c | c |}
    \hline
    $s$ & $j(E_s)$ & $D$ & $f$ \\ \hline
    $16$ & $0$ & $-3$ & $1$ \\ \hline
    $256,-104\pm60\sqrt{3}$ & $2\cdot 30^3$ & $-3$ & $2$ \\ \hline
    $-8,64$ & $12^3$ & $-4$ & $1$ \\ \hline
    $-512,280\pm 198\sqrt{2}$ & $66^3$ & $-4$ & $2$ \\ \hline
    $1,\frac{47\pm 45\sqrt{-7}}{2}$ & $-15^3$ & $-7$ & $1$ \\ \hline
    $4096,-2024\pm 765\sqrt{7}$ & $-15^3$ & $-7$ & $2$ \\ \hline
    $-64,56\pm 40\sqrt{2}$ & $20^3$ & $-8$ & $1$ \\ \hline
    $-1088\pm 768\sqrt{2}$ & $2417472\mp 1707264\sqrt{2}$ & $-24$ & $1$ \\ \hline
    $568+384\sqrt{2}\pm 336\sqrt{3} \pm 216\sqrt{6}$ & $2417472+1707264\sqrt{2}$ & $-24$ & $1$ \\ \hline
    $568\pm 384\sqrt{2}+ 336\sqrt{3} \pm 216\sqrt{6}$ & $2417472-1707264\sqrt{2}$ & $-24$ & $1$ \\ \hline
    \end{tabular}
    \caption{Some values of $s$ for which $E_s$ is CM.}
\label{tab:CM1}
\end{table}

For each value of $s$ in Table~\ref{tab:CM1}, it turns out that $n_2(s)$ (conjecturally) equals rational linear combinations of $L$-values of CM weight three newforms and those of Dirichlet characters, as listed in Table~\ref{tab:f2}. Note, however, that there are several algebraic values of $s$ other than those in Table~\ref{tab:CM1} which yield CM elliptic curves $E_s$, but we have not been able to determine whether the corresponding $n_2(s)$ are related to $L$-values. For example, if $s=16+1600\sqrt[3]{2}-1280\sqrt[3]{4}$, then $j(E_s)=-3\cdot 160^3$, so $E_s$ is CM. We hypothesize from the known examples that $n_2(s)$ should involve exactly three modular $L$-values, though no such conjectural formula has been found. 

Now let us consider the family $Z_s$ of quartic surfaces defined by $C_s=0$. It again follows from Long's result \cite[\S 5.2]{LongI} that if we parameterize $s$ by 
\begin{equation*}
s=s(u):=-\frac{2^{10}u^4}{(u^4-1)^2},
\end{equation*}
then a family of elliptic curves $G_{s(u)}$ whose $j$-function is given by 
\begin{equation*}
j(G_{s(u)})=\frac{64(3u^2+1)^3(u^2+3)^3}{(u^4-1)^2(u^2-1)^2}
\end{equation*}
gives rise to a Shioda-Inose structure of $Z_{s(u)}.$ Indeed, a Weierstrass form of $G_s$ is explicitly determined in our forthcoming paper \cite{SamartII}. Thus it can be shown in a similar manner that if $s$ is an algebraic number in the second column of Table~\ref{tab:f4}, then $Z_s$ is a singular $K3$ surface, and $n_4(s)$ relates to modular and Dirichlet $L$-values. See below for a table containing information analogous to that in Table~\ref{tab:CM1}. 

What is remarkable about a singular $K3$ surface defined over $\mathbb{Q}$ is that it is always modular, as mentioned in the introduction of this paper. Nevertheless, the modularity of singular $K3$ surfaces defined over arbitrary number fields is not known. The numerical evidences of relationships between the three-variable Mahler measures and $L$-values obtained in Section~\ref{Sec:six} might give us some clues about modularity of the corresponding $K3$ surfaces defined over some number fields. Nevertheless, this certainly requires further investigation. It would also be highly desirable to find all possible Mahler measure formulas $n_j(s),j=1,2,3$ which are expressible in terms of special $L$-values. 
\begin{table}[ht]
\centering
    \begin{tabular}{ | c | c | c | c |}
    \hline
    $s$ & $j(G_s)$ & $D$ & $f$ \\ \hline
    $-144,26856-15300\sqrt{3}$ & $2\cdot 30^3$ & $-3$ & $2$ \\ \hline
    $26856+15300\sqrt{3}$ & $1417905000+818626500\sqrt{3}$ & $-3$ & $4$ \\ \hline
    $648,143208-101574\sqrt{2}$ & $66^3$ & $-4$ & $2$ \\ \hline
    $-12288$ & $76771008+44330496\sqrt{3}$ & $-4$ & $3$ \\ \hline
    $143208+101574\sqrt{2}$ & $41113158120+29071392966\sqrt{2}$ & $-4$ & $4$ \\ \hline
    $81$ & $-15^3$ & $-7$ & $1$ \\ \hline
    $-3969,8292456-3132675\sqrt{7}$ & $255^3$ & $-7$ & $2$ \\ \hline
    $8292456+3132675\sqrt{7}$ & $137458661985000+51954490735875\sqrt{7}$ & $-7$ & $4$ \\ \hline
    $256,3656-2600\sqrt{2}$ & $20^3$ & $-8$ & $1$ \\ \hline
    $3656+2600\sqrt{2}$ & $26125000+18473000\sqrt{2}$ & $-8$ & $2$ \\ \hline
    $614656$ & $188837384000+77092288000\sqrt{6}$ & $-8$ & $3$ \\ \hline
    $\frac{-192303\pm 85995\sqrt{5}}{2}$ & $\frac{37018076625\mp 16554983445\sqrt{5}}{2}$ & $-15$ & $2$ \\ \hline
    $-1024$ & $632000+282880\sqrt{5}$ & $-20$ & $1$ \\ \hline
    $2304,1207368+853632\sqrt{2}-$ & $2417472+1707264\sqrt{2}$ & $-24$ & $1$ \\ 
    $697680\sqrt{3}-493272\sqrt{6}$ & & & \\ \hline
    $1207368-853632\sqrt{2}-$ & $2417472-1707264\sqrt{2}$ & $-24$ & $1$ \\ 
    $697680\sqrt{3}+493272\sqrt{6}$ & & & \\ \hline
    $1207368\pm 853632\sqrt{2}+$ & $5835036074184\pm 4125993565824\sqrt{2}+$ & $-24$ & $2$ \\ 
    $697680\sqrt{3}\pm 493272\sqrt{6}$ &$3368859648336\sqrt{3}\pm 2382143496408\sqrt{6}$ & & \\ \hline
    $20736$ & $212846400+95178240\sqrt{5}$ & $-40$ & $1$ \\ \hline
    $-82944$ & $3448440000+956448000\sqrt{13}$ & $-52$ & $1$ \\ \hline
    $-893952\pm 516096\sqrt{3}$ & $799200236736\mp 461418467328\sqrt{3}+$ & $-84$ & $1$ \\ 
     &$302069634048\sqrt{7}\mp 174399982848\sqrt{21}$ & & \\ \hline
    $347648256\pm 141926400\sqrt{6}$ & $120858928019208000\pm 49340450750976000\sqrt{6}\pm$ & $-168$ & $1$ \\ 
     &$32300907105600000\sqrt{14}+26373580212672000\sqrt{21}$ & & \\ \hline
    \end{tabular}
    \caption{Some values of $s$ for which $G_s$ is CM.}
\label{tab:CM2}
\end{table}
\section{Functional equations in the three-variable case} \label{Sec:five}
One has seen from \cite{LR} that $m_2(t)$ satisfies some functional equations, which can be applied in establishing new Mahler measure formulas as shown in Section~\ref{Sec:two}. This section aims to derive a functional equation for three-variable Mahler measures. We will show that
\begin{theorem}\label{T:FE1}
If $t\in\mathbb{C}\backslash \{0\}$ and $|t|$ is sufficiently small, then 
\begin{align*}
n_2\left(\frac{16}{t(1-t)}\right)&=9n_2\left(\frac{4(1+\sqrt{1-t})^6}{t^2\sqrt{1-t}}\right)+4n_2\left(\frac{-2^{10}(1+\sqrt{1-t})^6\sqrt{1-t}}{t^4}\right)\\ &\qquad\qquad -n_2\left(\frac{-16(1-t)^2}{t}\right) -8n_2\left(\frac{2(1+\sqrt[4]{1-t})^{12}}{t(1-\sqrt{1-t})^3\sqrt[4]{1-t}}\right).
\end{align*}
\end{theorem}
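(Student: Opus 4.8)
The plan is to recognize Theorem~\ref{T:FE1} as the image, under the modular parametrization of $n_2$, of a combination of the two-variable functional equations \eqref{E:KO} and \eqref{E:LR}. By \cite{Samart} (cf.\ also \cite{RogersMain}), for $s$ outside a small disk about the origin the Mahler measure $n_2(s)$ admits an Eisenstein--Kronecker representation $n_2(s_2(\tau))=\mathcal E(\tau)$, where $s_2(\tau)=-\Delta(\tau+\tfrac12)/\Delta(2\tau+1)$ and $\mathcal E$ is an explicit weight-$3$ Eisenstein--Kronecker series; equivalently, $n_2$ has a ${}_5F_4$-hypergeometric expansion analogous to Proposition~\ref{T:Hyper2}, consisting of a $\log s$ term and a power series in $64/s$. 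Two structural facts drive the proof. First, the Clausen identity ${}_2F_1(\tfrac14,\tfrac14;1;z)^2={}_3F_2(\tfrac12,\tfrac12,\tfrac12;1,1;z)$ exhibits the period of the $K3$ surface $X_s$ as the square of an elliptic period, so that, in view of the Shioda--Inose structure of Section~\ref{Sec:four}, the modular substitutions underlying \eqref{E:KO} and \eqref{E:LR} (the Fricke involution, $\tau\mapsto 2\tau$, and the level-raising maps) act in parallel on the $s_2$-parameter. Second, along a degree-$d$ isogeny the weight-$3$ series $\mathcal E$ rescales by $d^{2}$, whereas the weight-$2$ regulator behind $m_2$ does not rescale; this is precisely what upgrades the plain coefficients of \eqref{E:KO}--\eqref{E:LR} to the squares $9=3^2$, $4=2^2$, $1=1^2$ appearing in Theorem~\ref{T:FE1}, the remaining $8$ arising from a factor $2$ combined with an internal four-term identity.

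Concretely, I would first write $\tfrac{16}{t(1-t)}=s_2(\tau_0)$ for an explicit $\tau_0$ and note that $\sqrt{1-t}$ and $\sqrt[4]{1-t}$ are the natural coordinates on the points $2$- and $4$-isogenous to $\tau_0$. Then, by eta-quotient manipulations of exactly the type carried out in the proof of Proposition~\ref{P:threevar}, one verifies that
\[s_2(\tau_1)=\frac{4(1+\sqrt{1-t})^6}{t^2\sqrt{1-t}},\qquad s_2(\tau_2)=\frac{-2^{10}(1+\sqrt{1-t})^6\sqrt{1-t}}{t^4},\]
\[s_2(\tau_3)=\frac{-16(1-t)^2}{t},\qquad s_2(\tau_4)=\frac{2(1+\sqrt[4]{1-t})^{12}}{t(1-\sqrt{1-t})^3\sqrt[4]{1-t}}\]
for explicit $\tau_1,\dots,\tau_4$ lying over $\tau_0$, with the branches of the square and fourth roots chosen so that, for $|t|$ small, all five arguments lie in the domain where the representation of the first step is valid.

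It then remains to prove the single linear relation $\mathcal E(\tau_0)=9\,\mathcal E(\tau_1)+4\,\mathcal E(\tau_2)-\mathcal E(\tau_3)-8\,\mathcal E(\tau_4)$, which is the weight-$3$ analogue of the lattice-sum identity \eqref{E:ad3} used in Section~\ref{Sec:three}. One decomposes each $\mathcal E(\tau_i)$ into sublattice (equivalently, congruence-class) pieces dictated by the relevant isogeny, invokes the scaling of $\mathcal E$ under $\tau\mapsto d\tau$ that supplies the factors $d^2$, and checks that in the combination $1,-9,-4,1,8$ all pieces cancel. Combined with the elementary identity $\log s_2(\tau_0)\equiv 9\log s_2(\tau_1)+4\log s_2(\tau_2)-\log s_2(\tau_3)-8\log s_2(\tau_4)\pmod{2\pi i}$, which for $|t|$ small is read off directly from the formulas above, and upon taking real parts, this yields Theorem~\ref{T:FE1}. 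The crux is this last step together with the branch bookkeeping: one must pin down exactly which isogenous points $\tau_i$ occur, so that the weight-$3$ lattice sums close up with precisely the coefficients $1,-9,-4,1,8$, and one must fix the roots consistently so that the identity holds between real-valued functions and not merely modulo $\pi\ZZ$.

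A variant that sidesteps the weight-$3$ lattice computation is to first establish a formula $n_2(s)=\sum_j a_j\,m_2(\phi_j(s))$ expressing $n_2$ as a fixed $\QQ$-linear combination of two-variable Mahler measures. Such a formula follows by clearing the $z$-denominator in $A_s$, which makes it quadratic in $z$, applying Jensen's formula in the $z$-variable (the two roots are reciprocal, so only the larger one contributes), and recognizing the resulting planar Mahler measure as an $m_2$-value after a change of variables. One then applies \eqref{E:KO} and \eqref{E:LR} to each $m_2\bigl(\phi_j(\tfrac{16}{t(1-t)})\bigr)$ and collects terms; the five-term shape of Theorem~\ref{T:FE1} emerges from the four-term shape of \eqref{E:KO}--\eqref{E:LR}.
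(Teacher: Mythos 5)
Your overall skeleton --- parametrize all five arguments as $s_2$ evaluated at points obtained from a base point by $q\mapsto q^2$, $q\mapsto -q$, $q\mapsto\pm q^4$, prove a five-term linear relation among the corresponding $n_2$-values, and then evaluate $s_2$ at those points algebraically in $t$ --- matches the paper, and the algebraic-evaluation step is essentially what the paper does (via Ramanujan's identities for $\chi(q_2)$, $\chi(-q_2)$, $\chi(-q_2^2)$ from Berndt together with the duplication principle), so that part of your plan is sound modulo supplying the identities. The gap is in the five-term relation itself, which is the heart of the theorem and which you leave unproven. The paper does not establish it by a lattice-sum cancellation; it reads it off from an already-proved result of Rogers \cite{RogersMain}, namely the $3\times 3$ matrix identity expressing $G(q)$, $G(-q)$, $G(q^2)$ in terms of $n_2(s_2(q))$, $n_2(s_2(-q))$, and $2n_2(s_2(q^2))-n_2(s_2(-q^2))$: writing $G(q^2)$ once via the third row at $q$ and once via the first row at $q^2$ and equating yields exactly $n_2(s_2(q))=9n_2(s_2(q^2))+4n_2(s_2(-q^4))-n_2(s_2(-q))-8n_2(s_2(q^4))$. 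Your proposed substitute --- that the coefficients arise from the weight-$3$ series rescaling by $d^{2}$ along a degree-$d$ isogeny --- does not produce these numbers: the degree-$2$ point $s_2(q^2)$ would then carry coefficient $4$, not $9$; the point $s_2(-q)$ comes from $\tau\mapsto\tau+\tfrac{1}{2}$, which is not an isogeny at all; and the two degree-$4$ points carry $+4$ and $-8$ rather than a common $16$. So the asserted cancellation ``in the combination $1,-9,-4,1,8$'' is not backed by the mechanism you describe, and without either Rogers' matrix identity or an actually executed lattice computation the argument does not close.

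Your fallback variant is also not viable as stated: applying Jensen's formula in $z$ to $A_s$ does reduce to a two-variable integral (the two roots in $z$ are reciprocal), but the integrand is the logarithm of an algebraic function of $(x,y)$, not the Mahler measure of a member of Boyd's family $P_k$ after a change of variables, and no finite identity of the form $n_2(s)=\sum_j a_j\,m_2(\phi_j(s))$ is known or used in the paper. This is precisely why the three-variable functional equation has to be derived from the weight-$3$ machinery (the $G$-function/Eisenstein--Kronecker identities of Rogers) rather than pushed forward from \eqref{E:KO} and \eqref{E:LR}.
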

\begin{proof}
The proof requires some preliminary results from \cite[Thm.~2.3]{RogersMain} and Ramanujan's theory of elliptic functions. 
Following notations in \cite{RogersMain}, we let 
\begin{equation*}
G(q):=\Re\left(-\log(q)+240\sum_{n=1}^{\infty}n^2\log(1-q^n)\right), \quad
\chi(q):=\displaystyle\prod_{n=0}^{\infty}\left(1+q^{2n+1}\right).
\end{equation*}
Recall from Rogers' result that if $|q|$ is sufficiently small, then the following matrix equation holds:
\begin{equation*}
\left( \begin{array}{c} G(q)\\ G(-q) \\ G(q^2) \end{array} \right)=
\begin{pmatrix}
-19 & -4  & 12 \\
-4  & -19 & 12 \\
-3  & -3  & 4 
\end{pmatrix}
\left( \begin{array}{c} n_2(s_2(q))\\ n_2(s_2(-q)) \\ 2n_2\left(s_2\left(q^2\right)\right)-n_2\left(s_2\left(-q^2\right)\right) \end{array} \right).
\end{equation*}
Expressing $G(q^2)$ in two different ways, one finds that 
\begin{equation}\label{E:FE1}
n_2(s_2(q))=9n_2\left(s_2\left(q^2\right)\right)+4n_2\left(s_2\left(-q^4\right)\right)-n_2(s_2(-q))-8n_2\left(s_2\left(q^4\right)\right).
\end{equation}
Now let 
\begin{equation*}
z_2(t)=\pFq{2}{1}{\frac{1}{2},\frac{1}{2}}{1}{t},\qquad y_2(t)=\frac{\pi z_2(1-t)}{z_2(t)},\qquad q_2(t)=e^{-y_2}.
\end{equation*}
Note that $q_2(t)$ defined above is sometimes called the \textit{signature $2$ elliptic nome}. It is known from \cite[\S 17]{Berndt} that the following identities hold:
\begin{align*}
\chi(q_2)&=2^{1/6}\left(\frac{q_2}{t(1-t)}\right)^{1/{24}},\qquad \chi(-q_2)=2^{1/6}(1-t)^{1/{12}}\left(\frac{q_2}{t}\right)^{1/{24}},\\
\chi(-q_2^2)&=2^{1/3}(1-t)^{1/{24}}\left(\frac{q_2}{t}\right)^{1/{12}}.
\end{align*}
Moreover, we can deduce formulas for $\chi(q_2^2),\chi(q_2^4),$ and $\chi(-q_2^4)$ from the identities above using a process called \textit{obtaining a formula by duplication}; that is, if we have $\Omega(t,q_2,z_2)=0$, then  $$\Omega\left(\left(\frac{1-\sqrt{1-t}}{1+\sqrt{1+t}}\right)^2,q_2^2,\frac{z_2(1+\sqrt{1-t})}{2}\right)=0.$$
Therefore, by some manipulation, we find that 
\begin{align*}
\chi^{24}(q_2^2)&=\frac{4(1+\sqrt{1-t})^{6}}{t^2\sqrt{1-t}}q_2^2,\qquad \chi^{24}(q_2^4)=\frac{2(1+\sqrt[4]{1-t})^{12}}{t(1-\sqrt{1-t})^3\sqrt[4]{1-t}}q_2^4,\\
\chi^{24}(-q_2^4)&=\frac{2^{10}(1+\sqrt{1-t})^6\sqrt{1-t}}{t^4}q_2^4.
\end{align*}
The theorem then follows immediately from these identities and \eqref{E:FE1}.
\end{proof}
As an application of Theorem~\ref{T:FE1}, we can deduce a five-term relation 
\begin{align*}
n_2(64)&=9n_2\left(280+198\sqrt{2}\right)+4n_2\left(-143360-101376\sqrt{2}\right)\\&\qquad\qquad-n_2(-8)-8n_2\left(71704+50688\sqrt{2}+60282\sqrt[4]{2}+42633\sqrt[4]{8}\right)
\end{align*}
by letting $t=1/2$. It would be interesting to see if each term in the equation above is related to special $L$-values. It turns out that only a partial answer can be given here. First, it was rigorously proved in \cite[Thm.~1.2]{Samart} that $n_2(64)=8L'(g_{16},0),$ where $g_{16}(\tau)=\eta^6(4\tau)\in S_3(\Gamma_0(16),\chi_{-4}).$ Then, using the hypergeometric representation of $n_2(s)$ given in \cite[Prop.~2.2]{RogersMain}, we are able to verify numerically that the following formulas hold:
\begin{align*}
n_2(-8)&\stackrel{?}=4L'(g_{16},0)+L'(\chi_{-4},-1),\\
n_2\left(280+198\sqrt{2}\right)&\stackrel{?}=\frac{1}{8}(36L'(g_{16},0)+4L'(g_{64},0)+13L'(\chi_{-4},-1)+4L'(\chi_{-8},-1)),
\end{align*}
where $g_{64}(\tau)$ is the normalized newform of weight $3$ and level $64$ with rational Fourier coefficients. Nevertheless, no similar evidence for the remaining two terms has been found. From the previous examples and numerical observations exhibited at the end of this paper, it is not unreasonable to conjecture that
\begin{center}
$n_2\left(-143360-101376\sqrt{2}\right)$ and $n_2\left(71704+50688\sqrt{2}+60282\sqrt[4]{2}+42633\sqrt[4]{8}\right)$
\end{center}
involve two and four modular $L$-values, respectively, corresponding to weight $3$ newforms of higher level. However, we are still unable to find the $L$-values of the newforms that are likely to be our possible candidates.

It is also possible to obtain a functional equation for $n_4(s)$ defined in Section~\ref{Sec:three} using similar arguments above. Again, we see from \cite{RogersMain} that for $|q|$ sufficiently small 
\begin{equation*}
\left( \begin{array}{c} G(q)\\ G(-q) \\ G(q^2) \end{array} \right)=
\begin{pmatrix}
-5 & -2  & 4 \\
-2  & -5 & 4 \\
-1  & -1 & 2 
\end{pmatrix}
\left( \begin{array}{c} n_4(s_4(q))\\ n_4(s_4(-q)) \\ n_4(s_4(q^2)) \end{array} \right).
\end{equation*}
Hence we find that 
\begin{equation*}
n_4(s_4(q))=7n_4\left(s_4\left(q^2\right)\right)+2n_4\left(s_4\left(-q^2\right)\right)-n_4(s_4(-q))-4n_4\left(s_4\left(q^4\right)\right).
\end{equation*}
To express $s_4(q),s_4(-q),s_4(q^2),s_4(-q^2),$ and $s_4(q^4)$ in terms of algebraic functions of some parameter we need the Ramanujan's theory of signature $4$. (See \cite{BBG} for references.) However, the results we found are quite complicated because of multiple radical terms, so we do not include them here.

\section{Conjectural formulas of three-variable Mahler measures} \label{Sec:six}
We end this paper by tabulating all three-variable Mahler measure formulas that we found from numerical computations. The references to the proved formulas are given in the last column of each table. In Table~\ref{tab:f2}-\ref{tab:f4}, we use the following shorthand notations:
\begin{equation*}
d_k:=L'(\chi_{-k},-1), \qquad M_N := L'(g_N,0), \qquad M_{N\otimes D} := L'(g_N\otimes \chi_{D},0),
\end{equation*}
where $g_N$ is a normalized newform with rational Fourier coefficients in $S_3(\Gamma_0(N),\chi_{-N})$, and $g_N\otimes \chi_{D}$ is the quadratic twist of $g_N$ by $\chi_{D}$. If there are more than one such newforms, we shall distinguish them using superscripts. Each value of $\tau$ in the first column of each table can be determined as follows: Recall from the proof of \cite[Thm.2.3]{RogersMain} that if 
\begin{equation*}
q_j(\alpha)=\exp\left(-\frac{\pi}{\sin(\pi/j)}\frac{\pFq{2}{1}{\frac{1}{j},\frac{j-1}{j}}{1}{1-\alpha}}{\pFq{2}{1}{\frac{1}{j},\frac{j-1}{j}}{1}{\alpha}}\right),
\end{equation*}
then $s_2(q_2(\alpha))=\frac{16}{\alpha(1-\alpha)},s_3(q_3(\alpha))=\frac{27}{\alpha(1-\alpha)},$ and $s_4(q_4(\alpha))=\frac{64}{\alpha(1-\alpha)}.$ Hence we can recover a value of $\tau$ corresponding to $s_j(\tau)$ easily using these relations. For instance, each $\tau$ in Table~\ref{tab:f2} is given by
\begin{equation*}
\tau=\frac{i}{2}\frac{\pFq{2}{1}{\frac{1}{2},\frac{1}{2}}{1}{1-\frac{1+\sqrt{1-\frac{64}{s_2(\tau)}}}{2}}}{\pFq{2}{1}{\frac{1}{2},\frac{1}{2}}{1}{\frac{1+\sqrt{1-\frac{64}{s_2(\tau)}}}{2}}}.
\end{equation*}
    \begin{longtable}{ | c | c | c | c |}
    \hline
    $\tau$ & $s_2(\tau)$ & $n_2(s_2(\tau))$ & Reference \\ \hline
    $\frac{\sqrt{-1}}{2}$ & $64$ & $8M_{16}$ & \cite{Samart} \\ \hline
    $\frac{1+\sqrt{-1}}{2}$ & $-8$ & $4M_{16}+d_4$ & - \\ \hline
    $\frac{\sqrt{-4}}{2}$ & $280+198\sqrt{2}$ & $\frac{1}{8}\left(36M_{16}+4M_{16\otimes 8}+13d_4+4d_8\right)$ & - \\ \hline
    $\frac{2+\sqrt{-1}}{4}$ & $280-198\sqrt{2}$ & $\frac{1}{2}\left(36M_{16}-4M_{16\otimes 8}-13d_4+4d_8\right)$ & - \\ \hline        
    $\frac{1+\sqrt{-4}}{2}$ & $-512$ & $M_{64}+d_8$ & - \\ \hline
    $\frac{\sqrt{-2}}{2}$ & $56+40\sqrt{2}$ & $\frac{1}{4}\left(60M_{8}+4M_{8\otimes 8}+4d_4+d_8\right)$ & - \\ \hline
    $\frac{2+\sqrt{-2}}{4}$ & $56-40\sqrt{2}$ & $\frac{1}{2}\left(60M_{8}-4M_{8\otimes 8}+4d_4-d_8\right)$ & - \\ \hline
    $\frac{1+\sqrt{-2}}{2}$ & $-64$ & $2\left(M_{8\otimes 8}+d_4\right)$ & - \\ \hline
    $\frac{\sqrt{-3}}{2}$ & $256$ & $\frac{4}{3}\left(M_{12\otimes (-4)}+2d_4\right)$ & \cite{Samart} \\ \hline
    $\frac{1+\sqrt{-3}}{4}$ & $16$ & $8M_{12}$ & - \\ \hline
    $\frac{3+\sqrt{-3}}{6}$ & $-104+60\sqrt{3}$ & $\frac{1}{2}\left(4M_{12\otimes (-4)}-36M_{12}+15d_3-8d_4\right)$ & - \\ \hline
    $\frac{1+\sqrt{-3}}{2}$ & $-104-60\sqrt{3}$ & $\frac{1}{6}\left(4M_{12\otimes (-4)}+36M_{12}+15d_3+8d_4\right)$ & - \\ \hline   
    $\frac{\sqrt{-6}}{2}$ & $568+384\sqrt{2}$ & $\frac{1}{24}\bigl(60M_{24}^{(1)}+12M_{24}^{(2)}+4M_{24\otimes (-8)}^{(1)}+4M_{24\otimes (-8)}^{(2)}$ & - \\ 
                            & $+336\sqrt{3}+216\sqrt{6}$ & $+60d_3+24d_4+8d_8+d_{24}\bigr)$ &  \\ \hline
    $\frac{6+\sqrt{-6}}{12}$ & $568+384\sqrt{2}$ & $\frac{1}{4}\bigl(60M_{24}^{(1)}+12M_{24}^{(2)}-4M_{24\otimes (-8)}^{(1)}-4M_{24\otimes (-8)}^{(2)}$ & - \\ 
                            & $-336\sqrt{3}-216\sqrt{6}$ & $-60d_3+24d_4+8d_8-d_{24}\bigr)$ &  \\ \hline
    $\frac{\sqrt{-6}}{6}$ & $568-384\sqrt{2}$ & $\frac{1}{12}\bigl(60M_{24}^{(1)}-12M_{24}^{(2)}+4M_{24\otimes (-8)}^{(1)}-4M_{24\otimes (-8)}^{(2)}$ & - \\ 
                            & $+336\sqrt{3}-216\sqrt{6}$ & $+60d_3+24d_4-8d_8-d_{24}\bigr)$ &  \\ \hline
    $\frac{-2+\sqrt{-6}}{10}$ & $568-384\sqrt{2}$ & $\frac{1}{12}\bigl(60M_{24}^{(1)}-12M_{24}^{(2)}-4M_{24\otimes (-8)}^{(1)}+4M_{24\otimes (-8)}^{(2)}$ & - \\ 
                            & $-336\sqrt{3}+216\sqrt{6}$ & $+60d_3-24d_4+8d_8-d_{24}\bigr)$ &  \\ \hline                              
    $\frac{3+\sqrt{-6}}{6}$ & $-1088+768\sqrt{2}$ & $M_{24\otimes (-8)}^{(1)}-M_{24\otimes (-8)}^{(2)}-6d_4+2d_8$ & - \\ \hline
    $\frac{1+\sqrt{-6}}{2}$ & $-1088-768\sqrt{2}$ & $\frac{1}{3}\left(M_{24\otimes (-8)}^{(1)}+M_{24\otimes (-8)}^{(2)}+6d_4+2d_8\right)$ & - \\ \hline
    $\frac{\sqrt{-7}}{2}$ & $4096$ & $\frac{4}{7}\left(M_{7\otimes (-4)}+8d_4\right)$ & - \\ \hline 
     $\frac{3+\sqrt{-7}}{8}$ & $1$ & $8M_{7}$ & - \\ \hline     
     $\frac{\pm 1+\sqrt{-7}}{8}$ & $\displaystyle\frac{47\pm 45\sqrt{-7}}{2}$ & $\frac{4}{7}\left(54M_{7}+d7\right)$ & - \\ \hline       
    $\frac{7+\sqrt{-7}}{14}$ & $-2024+765\sqrt{7}$ & $\frac{1}{2}\left(4M_{7\otimes (-4)}-384M_{7}-32d_4+11d_7\right)$ & - \\ \hline  
    $\frac{1+\sqrt{-7}}{2}$ & $-2024-765\sqrt{7}$ & $\frac{1}{14}\left(4M_{7\otimes (-4)}+384M_{7}+32d_4+11d_7\right)$ & - \\ \hline           
    \caption{Some $L$-value expressions of $n_2(s)$}
\label{tab:f2}
\end{longtable}

    \begin{longtable}{ | c | c | c | c |}
    \hline
    $\tau$ & $s_3(\tau)$ & $n_3(s_3(\tau))$ & Reference \\ \hline
    $\frac{1+\sqrt{-2}}{3}$ & $8$ & $15M_{8}$ & - \\ \hline
    $\frac{\sqrt{-3}}{3}$ & $108$ & $15M_{12}$ & \cite{RogersMain} \\ \hline
    $\frac{\sqrt{-6}}{3}$ & $216$ & $\frac{15}{4}\left(M_{24}^{(2)}+d_3\right)$ & \cite{Samart} \\ \hline
    $\frac{\sqrt{-9}}{3}$ & $288+168\sqrt{3}$ & $\frac{5}{12}\left(3M_{36}^{(2)}+3M_{36}^{(1)}+6d_3+4d_4\right)$ & - \\ \hline
    $\frac{1+\sqrt{-1}}{2}$ & $288-168\sqrt{3}$ & $\frac{5}{6}\left(3M_{36}^{(2)}-3M_{36}^{(1)}-6d_3+4d_4\right)$ & - \\ \hline
    $\frac{\sqrt{-12}}{3}$ & $1458$ & $\frac{15}{8}\left(9M_{12}+2d_4\right)$ & \cite{Samart} \\ \hline 
    $\frac{\sqrt{-15}}{3}$ & $3375$ & $\frac{3}{5}\left(20M_{15}^{(2)}+13d_3\right)$ & - \\ \hline
    $\frac{\sqrt{-18}}{3}$ & $3704+1456\sqrt{6}$ & $\frac{5}{24}\left(3M_{8\otimes (-3)}+72M_{8}+18d_3+4d_8\right)$ & - \\ \hline   
    $\frac{\sqrt{-2}}{2}$ & $3704-1456\sqrt{6}$ & $\frac{5}{12}\left(3M_{8\otimes (-3)}-72M_{8}-18d_3+4d_8\right)$ & - \\ \hline
    $\frac{\sqrt{-21}}{3}$ & $7344+2808\sqrt{7}$ & $\frac{15}{28}\left(M_{84}^{(2)}+M_{84}^{(4)}+4d_4+2d_7\right)$ & - \\ \hline         $\frac{3+\sqrt{-21}}{6}$ & $7344-2808\sqrt{7}$ & $\frac{15}{14}\left(M_{84}^{(2)}-M_{84}^{(4)}-4d_4+2d_7\right)$ & - \\ \hline
    $\frac{\sqrt{-24}}{3}$ & $14310+8262\sqrt{3}$ & $\frac{15}{32}\left(7M_{24}^{(2)}+M_{24\otimes (-8)}^{(2)}+11d_3+6d_4\right)$ & - \\ \hline
    $\frac{-3+\sqrt{-6}}{2}$ & $14310-8262\sqrt{3}$ & $\frac{15}{8}\left(7M_{24}^{(2)}-M_{24\otimes (-8)}^{(2)}+11d_3-6d_4\right)$ & - \\ \hline
    $\frac{\sqrt{-30}}{3}$ & $48168+15120\sqrt{10}$ & $\frac{3}{40}\left(5M_{120}^{(2)}+5M_{120}^{(4)}+5d_{15}+2d_{24}\right)$ & - \\ \hline
    $\frac{6+\sqrt{-30}}{6}$ & $48168-15120\sqrt{10}$ & $\frac{3}{20}\left(5M_{120}^{(2)}-5M_{120}^{(4)}+5d_{15}-2d_{24}\right)$ & - \\ \hline
    \caption{Some $L$-value expressions of $n_3(s)$}
\label{tab:f3}
\end{longtable}

    \begin{longtable}{ | c | c | c | c |}
    \hline
    $\tau$ & $s_4(\tau)$ & $n_4(s_4(\tau))$ & Reference \\ \hline
    $\frac{\sqrt{-2}}{2}$ & $256$ & $40M_8$ & \cite{RogersMain} \\ \hline
    $\frac{\sqrt{-8}}{2}$ & $3656+2600\sqrt{2}$ & $\frac{5}{8}\left(4M_{8\otimes 8}+28M_{8}+4d_4+d_8\right)$ & - \\ \hline
    $\frac{1+\sqrt{-2}}{2}$ & $3656-2600\sqrt{2}$ & $\frac{5}{4}\left(4M_{8\otimes 8}-28M_8+4d_4-d_8\right)$ & - \\ \hline
    $\frac{\sqrt{-12}}{2}$ & $26856+15300\sqrt{3}$ & $\frac{5}{12}\left(4M_{12\otimes (-4)}+20M_{12}+11d_3+8d_4\right)$ & Thm.~\ref{P:threevar} \\ \hline
    $\frac{\sqrt{-3}}{2}$ & $26856-15300\sqrt{3}$ & $\frac{5}{6}\left(4M_{12\otimes (-4)}-20M_{12}-11d_3+8d_4\right)$ & Thm.~\ref{P:threevar} \\ \hline
    $\frac{1+\sqrt{-3}}{2}$ & $-144$ & $\frac{10}{3}\left(4M_{12}+d_3\right)$ & - \\ \hline
    $\frac{\sqrt{-4}}{2}$ & $648$ & $\frac{5}{2}\left(4M_{16}+d_4\right)$ & \cite{Samart} \\ \hline
    $\frac{\sqrt{-16}}{2}$ & $143208+101574\sqrt{2}$ & $\frac{5}{16}\left(4M_{16\otimes 8}+20M_{16}+9d_4+4d_8\right)$ & - \\ \hline
    $\frac{1+\sqrt{-4}}{2}$ & $143208-101574\sqrt{2}$ & $\frac{5}{8}\left(4M_{16\otimes 8}-20M_{16}-9d_4+4d_8\right)$ & - \\ \hline
    $\frac{1+\sqrt{-5}}{2}$ & $-1024$ & $\frac{8}{5}\left(5M_{20}^{(1)}+2d_4\right)$ & - \\ \hline
    $\frac{\sqrt{-6}}{2}$ & $2304$ & $\frac{20}{3}\left(M_{24}^{(1)}+d_3\right)$ & \cite{Samart} \\ \hline
    $\frac{\sqrt{-24}}{2}$ & $1207368+853632\sqrt{2}$ & $\frac{5}{48}\bigl(4M_{24\otimes (-8)}^{(1)}+4M_{24\otimes (-8)}^{(2)}+28M_{24}^{(1)}+12M_{24}^{(2)}$ & - \\ 
                            & $+697680\sqrt{3}+493272\sqrt{6}$ & $+28d_3+24d_4+8d_8+d_{24}\bigr)$ &  \\ \hline
    $\frac{1+\sqrt{-6}}{2}$ & $1207368+853632\sqrt{2}$ & $\frac{5}{24}\bigl(4M_{24\otimes (-8)}^{(1)}+4M_{24\otimes (-8)}^{(2)}-28M_{24}^{(1)}-12M_{24}^{(2)}$ & - \\ 
                            & $-697680\sqrt{3}-493272\sqrt{6}$ & $-28d_3+24d_4+8d_8-d_{24}\bigr)$ &  \\ \hline
    $\frac{\sqrt{-6}}{4}$ & $1207368-853632\sqrt{2}$ & $\frac{5}{16}\bigl(4M_{24\otimes (-8)}^{(1)}-4M_{24\otimes (-8)}^{(2)}+28M_{24}^{(1)}-12M_{24}^{(2)}$ & - \\ 
                            & $+697680\sqrt{3}-493272\sqrt{6}$ & $-28d_3-24d_4+8d_8+d_{24}\bigr)$ &  \\ \hline
    $\frac{2+\sqrt{-6}}{4}$ & $1207368-853632\sqrt{2}$ & $\frac{5}{12}\bigl(-4M_{24\otimes (-8)}^{(1)}+4M_{24\otimes (-8)}^{(2)}+28M_{24}^{(1)}-12M_{24}^{(2)}$ & - \\ 
                            & $-697680\sqrt{3}+493272\sqrt{6}$ & $+28d_3-24d_4+8d_8-d_{24}\bigr)$ &  \\ \hline
    $\frac{\sqrt{-28}}{2}$ & $8292456+3132675\sqrt{7}$ & $\frac{5}{28}\left(4M_{7\otimes (-4)}+224M_{7}+32d_4+7d_7\right)$ & - \\ \hline
    $\frac{\sqrt{-7}}{2}$ & $8292456-3132675\sqrt{7}$ & $\frac{5}{14}\left(4M_{7\otimes (-4)}-224M_{7}+32d_4-7d_7\right)$ & - \\ \hline 
    $\frac{\sqrt{14}+\sqrt{-28}}{8}$ & $81$ & $40M_{7}$ & - \\ \hline
    $\frac{1+\sqrt{-7}}{2}$ & $-3969$ & $\frac{10}{7}\left(40M_{7}+d_7\right)$ & - \\ \hline
    $\frac{1+\sqrt{-9}}{2}$ & $-12288$ & $\frac{40}{9}\left(M_{36}^{(1)}+2d_3\right)$ & - \\ \hline
    $\frac{\sqrt{-10}}{2}$ & $20736$ & $\frac{4}{5}\left(5M_{40}^{(1)}+2d_8\right)$ & \cite{Samart} \\ \hline
    $\frac{1+\sqrt{-13}}{2}$ & $-82944$ & $\frac{40}{13}\left(M_{52}^{(1)}+2d_4\right)$ & - \\ \hline  
    $\frac{3+\sqrt{-15}}{6}$ & $\displaystyle\frac{-192303+85995\sqrt{5}}{2}$ & $\frac{1}{5}\left(160M_{15}^{(1)}-120M_{15}^{(2)}-88d_3+5d_{15}\right)$ & - \\ \hline
    $\frac{1+\sqrt{-15}}{2}$ & $\displaystyle\frac{-192303-85995\sqrt{5}}{2}$ & $\frac{1}{15}\left(160M_{15}^{(1)}+120M_{15}^{(2)}+88d_3+5d_{15}\right)$ & - \\ \hline   
    $\frac{\sqrt{-18}}{2}$ & $614656$ & $\frac{40}{3}(5M_{8}+d_3)$ & \cite{Samart} \\ \hline
    $\frac{3+\sqrt{-21}}{6}$ & $-893952+516096\sqrt{3}$ & $\frac{20}{7}\left(M_{84}^{(3)}-M_{84}^{(4)}+8d_3-4d_4\right)$ & - \\ \hline
    $\frac{1+\sqrt{-21}}{2}$ & $-893952-516096\sqrt{3}$ & $\frac{20}{21}\left(M_{84}^{(3)}+M_{84}^{(4)}+8d_3+4d_4\right)$ & - \\ \hline    
    $\frac{\sqrt{-42}}{42}$ & $347648256+141926400\sqrt{6}$ & $\frac{10}{21}\left(M_{168}^{(3)}+M_{168}^{(4)}+20d_3+4d_8\right)$ & - \\ 
\hline
$\frac{\sqrt{-42}}{14}$ & $347648256-141926400\sqrt{6}$ & $\frac{10}{7}\left(M_{168}^{(3)}-M_{168}^{(4)}-20d_3+4d_8\right)$ & - \\
     \hline     
    \caption{Some $L$-value expressions of $n_4(s)$}
\label{tab:f4}
\end{longtable}

\bibliographystyle{amsplain}

\begin{thebibliography}{99}
\bibitem {AOP} S. Ahlgren, K. Ono, and D. Penniston, \textit{Zeta function of an infinite family of $K3$ surfaces}, Amer. J. of Math. \textbf{124} (2002), 353--368.

\bibitem {Berndt} B.C. Berndt, \textit{Ramanujan's Notebooks Part III}, Springer-Verlag, New York, NY, 1991.

\bibitem {BBG} B.C. Berndt, S. Bhargava, F.G. Garvan, \textit{Ramanujan's theories of elliptic functions to alternative bases}, Trans. Amer. Math. Soc. \textbf{347} (1995), 4163--4244.

\bibitem {BertinMain} M.J. Bertin, \textit{Mesure de Mahler d'hypersurfaces $K3$}, J. Number Theory \textbf{128} (2008), 2890--2913.

\bibitem {Boyd} D.W. Boyd, \textit{Mahler's measure and special values of L-functions}, Exper. Math. \textbf{7} (1998), 37--82.

\bibitem {ChenYui} I. Chen and N. Yui, \textit{Singular values of Thompson series}, Groups, difference sets, and the Monster (Columbus, OH, 1993), Ohio State Univ. Math. Res. Inst. Publ. vol. \textbf{4}, Berlin (1996), 255--326.

\bibitem {Deninger} C. Deninger, \textit{Deligne periods of mixed motives, $K$-theory and the entropy of certain $\mathbb{Z}^n$-actions}, J. Amer. Math. Soc. \textbf{10} (1997), 259–-281.

\bibitem {Lattice} M.L. Glasser and I.J. Zucker, \textit{Lattice sums}, Theoretical Chemistry - Advances and Perspectives. V, 67-139, Academic Press, New York, NY, 1980.

\bibitem {Guillera} J. Guillera and M. Rogers, \textit{Mahler measure and the WZ algorithm}, Proc. Amer. Math. Soc. (to appear)

\bibitem {Hartmann} H. Hartmann, \textit{Period- and mirror-maps for the quartic K3}, manuscripta math. \textbf{141} {2013}, 391--422. 

\bibitem {KO} N. Kurokawa and H. Ochiai, \textit{Mahler measures via the crystalization}, Comment. Math. Univ. St. Pauli \textbf{54} (2005), 121--137.

\bibitem {LR} M.N. Lal\'{i}n and M.D. Rogers, \textit{Functional equations for Mahler measures of genus-one curves}, Algebra Number Theory \textbf{1} (2007), 87--117.

\bibitem {Liv} R. Livn\'{e}, \textit{Motivic orthogonal two-dimensional representations of $\Gal(\overline{\mathbb{Q}}/\mathbb{Q})$}, Israel J. Math. \textbf{92} (1995), 149--156.

\bibitem {LongI} L. Long, \textit{Modularity of elliptic surfaces}, Ph.D. Thesis, The Pennsylvania State University, 2002.

\bibitem {LongII} L. Long, \textit{On a Shioda-Inose structure of a family of $K3$ surfaces}, Calabi-Yau Varieties and Mirror Symmetry, Fields Institute Communications, vol. 38, American Mathematical Society, Providence, RI, 2003, 201--207.

\bibitem {LongIII} L. Long, \textit{On Shioda-Inose structure of one-parameter families of $K3$ surfaces}, J. Number Theory, \textbf{109} (2004), 299-318.

\bibitem {Morrison} D.R. Morrison, \textit{On $K3$ surfaces with large Picard number}, Invent. Math. \textbf{75} (1984), 105--121.

\bibitem {Ono} K. Ono, \textit{The web of modularity: arithmetic of the coefficients of modular forms and q-series}, Amer. Math. Soc., Providence, RI, 2004.

\bibitem {RV} F. Rodriguez Villegas, \textit{Modular Mahler measures I}, Topics in Number Theory (University Park, PA, 1997), Kluwer, Dordrecht (1999), 17--48.

\bibitem {RogersHyper} M.D. Rogers, \textit{Hypergeometric formulas for lattice sums and Mahler measures}, Int. Math. Res. Notices \textbf{17} (2011), 4027--4058. 

\bibitem {RogersMain} M.D. Rogers, \textit{New $_5F_4$ hypergeometric transformations, three-variable Mahler measures, and formulas for $1/\pi$}, Ramanujan J. \textbf{18} (2009), 327--340.

\bibitem {RY} M. Rogers and B. Yuttanan, \textit{Modular equations and lattice sums}, Computational and Analytical Mathematics, Springer Proceedings in Mathematics (to appear).

\bibitem {RZ0} M. Rogers and W. Zudilin, \textit{From $L$-series of elliptic curves to Mahler measures}, Compos. Math. \textbf{148} (2012), 385--414.

\bibitem {RZ} M. Rogers and W. Zudilin, \textit{On the Mahler measure of $1+X+1/X+Y+1/Y$}, Int. Math. Res. Notices (to appear).

\bibitem {Samart} D. Samart, \textit{Three-variable Mahler measures and special values of modular and Dirichlet $L$-series}, Ramanujan J. (to appear).

\bibitem {SamartII} D. Samart, \textit{The elliptic trilogarithm and Mahler measures of $K3$ surfaces}, in preparation.

\bibitem {Schutt} M. Sch\"{u}tt, \textit{CM newforms with rational coefficients}, Ramanujan J. \textbf{19} (2009), 187--205.

\bibitem {SchuttII} M. Sch\"{u}tt, \textit{Two lectures on the arithmetic of K3 surfaces}, Arithmetic and Geometry of K3 Surfaces and Calabi-Yau Threefolds, Fields Institute Communications, vol. 67, Springer, New York, NY, 2013, 71--99.

\bibitem {SS} M. Sch\"{u}tt and T. Shioda, \textit{Elliptic surfaces}, Algebraic geometry in East Asia-Seoul 2008, Adv. Stud. Pure Math., vol. 60, Math. Soc. Japan, Tokyo, 2010, 51--160.


\bibitem {Silverman} J.H. Silverman, \textit{Advanced topics in the arithmetic of elliptic curves}, Springer, New York, NY, 1999.

\bibitem {Weber} H. Weber, \textit{Lehrbuch der Algebra}, Bd. III, F. Vieweg \& Sohn, Braunschweig, 1908.

\bibitem {Yui} N. Yui, \textit{Update on the moudularity of Calabi-Yau varieties}, Calabi-Yau Varieties and Mirror Symmetry, Fields Institute Communications, vol. 38, American Mathematical Society, Providence RI, 2003, 307--362.

\bibitem {YuiZagier} N. Yui and D. Zagier, \textit{On the singular values of Weber modular functions}, Math. Comp. \textbf{66} (1997), 1645--1662.

\bibitem {Zudilin} W. Zudilin, \textit{Regulator of modular units and Mahler measures}, preprint \texttt{arXiv:1304.3869 [math.NT]} (2013)
\end{thebibliography}

\end{document}